\documentclass[reqno,12pt]{amsproc}
\usepackage{amssymb}
\usepackage{amsmath}
\usepackage{amsfonts}
\usepackage{microtype}
\usepackage[canadian]{babel}
\usepackage{xcolor}
\usepackage{geometry}
\usepackage{enumitem}
\usepackage{tikz-cd}

\definecolor{myurlcolor}{rgb}{0,0,0.3}
\definecolor{mycitecolor}{rgb}{0,0.3,0}
\definecolor{myrefcolor}{rgb}{0.3,0,0}
\usepackage[pagebackref,draft=false]{hyperref}
\hypersetup{colorlinks,
linkcolor=myrefcolor,
citecolor=mycitecolor,
urlcolor=myurlcolor}

\usepackage{cleveref}

\newcommand{\beq}{\begin{equation}}
\newcommand{\eeq}{\end{equation}}
\newcommand{\N}{\mathbb{N}}
\newcommand{\Nplus}{\mathbb{N}_{> 0}}			
\newcommand{\Z}{\mathbb{Z}}
\newcommand{\Q}{\mathbb{Q}}
\newcommand{\R}{\mathbb{R}}
\newcommand{\Rplus}{\mathbb{R}_{> 0}}			

\newcommand{\eps}{\varepsilon}

\newcommand{\asympgeq}{\gtrsim}			
\newcommand{\Spec}[1]{\mathsf{Spec}(#1)}		

\newcommand{\aovs}[1]{\mathsf{aovs}(#1)}	
\newcommand{\Nbhd}{\mathcal{N}}			


\theoremstyle{plain}
\newtheorem{dummy}{Dummy}[section]
\newtheorem{thm}[dummy]{Theorem}\Crefname{thm}{Theorem}{Theorems}
\newtheorem{lem}[dummy]{Lemma}\Crefname{lem}{Lemma}{Lemmas}
\newtheorem{prop}[dummy]{Proposition}\Crefname{prop}{Proposition}{Propositions}
\newtheorem{cor}[dummy]{Corollary}\Crefname{cor}{Corollary}{Corollaries}
\Crefname{conj}{Conjecture}{Conjectures}
\Crefname{qstn}{Question}{Questions}
\newtheorem{defn}[dummy]{Definition}\Crefname{defn}{Definition}{Definitions}
\newtheorem{prob}[dummy]{Problem}\Crefname{prob}{Problem}{Problems}
\theoremstyle{remark}
\newtheorem{ex}[dummy]{Example}\Crefname{ex}{Example}{Examples}
\newtheorem{rem}[dummy]{Remark}\Crefname{rem}{Remark}{Remarks}
\Crefname{note}{Note}{Notes}
\numberwithin{equation}{section}


\Crefformat{enumi}{#2#1#3}

\allowdisplaybreaks

\protected\def\verythinspace{%
	\ifmmode
		\mskip0.5\thinmuskip
	\else
		\ifhmode
			\kern0.08334em
		\fi
	\fi}
\renewcommand{\,}{\verythinspace}

\setlength{\jot}{6pt}

\let\originalleft\left
\let\originalright\right
\renewcommand{\left}{\mathopen{}\mathclose\bgroup\originalleft}
\renewcommand{\right}{\aftergroup\egroup\originalright}

\setcounter{tocdepth}{1}

\usepackage{todonotes}


\begin{document}

\title{A generalization of Strassen's Positivstellensatz}

\author{Tobias Fritz}

\address{Perimeter Institute for Theoretical Physics, Waterloo, Canada}
\email{tfritz@pitp.ca}

\keywords{}

\subjclass[2010]{Primary: 06F25, 16Y60; Secondary: 12J15, 14P10.}

\thanks{\textit{Acknowledgements.} Many thanks to P\'eter Vrana for pointing out an error in a previous version of this paper. We also thank Max Daniel, Richard K\"ung, Rostislav Matveev, Tim Netzer, Luciano Pomatto, Konrad Schm\"udgen, Markus Schweighofer, Matteo Smerlak, Volker Strassen, Arleta Szko{\l}a, Omer Tamuz and P\'eter Vrana for useful discussions and feedback, as well as David Handelman and Terence Tao for discussion at \href{https://mathoverflow.net/questions/314378/when-can-a-function-be-made-positive-by-averaging}{mathoverflow.net/questions/314378/when-can-a-function-be-made-positive-by-averaging}. We also thank the referee for useful comments which have improved the manuscript substantially. Most of this work was conducted while the author was with the Max Planck Institute for Mathematics in the Sciences, which we thank for its outstanding research environment.}

\begin{abstract}
Strassen's Positivstellensatz is a powerful but little known theorem on preordered commutative semirings satisfying a boundedness condition similar to Archimedeanicity. It characterizes the relaxed preorder induced by all monotone homomorphisms to $\R_+$ in terms of a condition involving large powers. Here, we generalize and strengthen Strassen's result. As a generalization, we replace the boundedness condition by a polynomial growth condition; as a strengthening, we prove two further equivalent characterizations of the homomorphism-induced preorder in our generalized setting. 
\end{abstract}

\maketitle

\tableofcontents


\section{Introduction}

The subject of Positivstellens\"atze in real algebraic geometry has a long history tracing back to Hilbert's 17th problem and Artin's proof that every nonnegative real multivariate polynomial is a sum of squares of real rational functions. Since then, many further Positivstellens\"atze have been proven, with sums of squares often at centre stage~\cite{marshall,PD,scheiderer},~\cite[Chapter~12]{schmudgen}. This has resulted in a surprising variety of applications to probability, optimal control, and other areas~\cite{lasserre}.

A result going in a different direction is Strassen's Positivstellensatz from~\cite{strassen}, which we recall below as \Cref{spss}. Here, sums of squares do not appear to play an important role. Rather, a central concept is the \emph{asymptotic preordering} on a preordered semiring defined in terms of taking large powers of the semiring elements. Perhaps due to this difference, and because Strassen's result was developed around its intended application to the computational complexity of matrix multiplication, this Positivstellensatz does not seem to be well-known in the real algebraic geometry community. Let us give some comments on why we think that Positivstellens\"atze of Strassen's type are important.

Preordered semirings are ubiquitous structures coming up in many areas of mathematics; in fact, they secretly enjoy a pervasiveness similar to that of the concept of category. To wit, the collection of all mathematical structures of a certain kind often forms a preordered semiring.
Historically, this type of observation has become apparent primarily in the study of vector bundles, whose isomorphism classes form a semiring under direct sum and tensor product, leading to the development of $K$-theory. One can equip this semiring with a preorder structure by declaring one vector bundle to be greater than another if the first contains an isomorphic copy of the second. Similar constructions are interesting in many other settings as well: they make sense whenever one has a class of mathematical objects with well-behaved analogues of direct sum and tensor product constructions, in such a way that the tensor product distributes over the direct sum, and such that there is a sensible notion of when one object is contained another object or perhaps forms a quotient or subquotient, giving a preorder relation. If this preorder relation is compatible with the direct sums and tensor products, then we obtain a preordered semiring. Thus Positivstellens\"atze for abstract semirings, such as Strassen's, should be expected to be essential and useful tools in analyzing the structure of semirings of this type, giving criteria for when a large power of one object contains an isomorphic copy of a large power of another object.

However, the inherent boundedness requirement (\Cref{strassen_preordered}) in Strassen's Positivstellensatz makes it inapplicable in many such situations. At the most basic level, Strassen's Positivstellensatz does not even apply to the polynomial preordered semiring $\N[X]$ with the coefficientwise order. This is why we develop a generalization of Strassen's Positivstellensatz as \Cref{main}, where the boundedness condition on a preordered semiring $S$ of characteristic zero is replaced by a polynomial growth condition. We will present further strengthenings in upcoming work. We plan to explore a number of applications to semirings of mathematical objects, as described in the previous paragraph, in future research.

More technically, suppose that $S$ is a preordered semiring of polynomial growth and satisfying $1 \ge 0$. Then our \Cref{main} provides three different characterizations of when given nonzero elements $x,y\in S$ satisfy $f(x) \geq f(y)$ for all order-preserving semiring homomorphisms $S \to \R_+$. One of these characterizations recovers Strassen's Positivstellensatz; the other two are new even in the case where Strassen's stronger boundedness assumption holds. They enable us to make connections with more conventional Positivstellens\"atze involving sums of squares (\Cref{compare}).

\subsection*{Relation to the existing literature.} We now outline the relation between our Positivstellensatz and its proof and the existing literature in real algebraic geometry. A reader familiar with this literature may want to take the following remarks as a starting point for studying the present paper.

\begin{enumerate}
	\item A central theme for us is the replacement of Strassen's Archimedeanicity-type assumption (\Cref{strassen_preordered}) by a polynomial growth condition (\Cref{univ_defn}). An assumption of this type has previously been used by Marshall in~\cite{marshall2} and subsequently by Schweighofer in~\cite[Theorem~5.1]{schmudgen} for a generalization of Schm\"udgen's Positivstellensatz which does not require compactness of the semialgebraic set involved. Thus while there is strong conceptual similarity here to our definition, the technical details of the definition itself and the way in which the respective growth condition is used in the proofs seem quite different.
\item One of the most basic applications of our Positivstellensatz is to the semiring of polynomials $\R_+[X_1,\ldots,X_d]$ equipped with the coefficientwise preorder (\Cref{poly}). In this case, we obtain a result which characterizes nonnegativity of a polynomial on the positive orthant. This result is strictly weaker than P\'olya's Positivstellensatz, which is also concerned with coefficientwise positivity~\cite[p.~57]{ineqs}. However, our \Cref{ex_subset} goes beyond P\'olya's Positivstellensatz by characterizing positivity on arbitrary subsets of the orthant $\R_+^d$ in terms of coefficientwise positivity.
\item The most commonly considered types of ordered algebraic structures in real algebraic geometry are rings, equipped either with a \emph{preprime}~\cite[Definition~5.4.1]{PD} or with a \emph{quadratic module}~\cite[p.~4/5]{PD}. Our approach in terms of preordered semirings subsumes both of these situations as special cases, resulting in \Cref{qm,kd-like}. This is similar to Marshall's approach involving $T$-modules for a preprime $T$~\cite[Chapter~5]{marshall}, which also comprises both preprimes and quadratic modules, although Marshall's main focus is on the Archimedean case.

	More precisely, there is an equivalence of structures between preordered semirings $S$ which satisfy $1 \ge 0$ as well as order-cancellativity on the one hand, and rings $R$ equipped with a preprime $T$ and a $T$-module $M$ in Marshall's sense such that $R = T - T$ on the other hand (\Cref{equiv}). Since we do not require our preordered semirings to be order-cancellative, our approach is even more general than Marshall's.
\item The proof of \Cref{semifield_case} crucially relies on the fact that the cone $C^*$ appearing in there is the union $\bigcup_{\eps > 0} C^*_\eps$, where each subcone $C^*_\eps$ is a face of $C^*$ with a compact cap. The fact that this is possible relies in our usage of the locally convex topology induced by the sets $\Nbhd_\eps$. We suspect that this topology is related to the result of Lasserre and Netzer on the density of sums of squares polynomials inside nonnegative polynomials~\cite{LN}.
\item One of the central ingredients of our proof of \Cref{semifield_case} is showing that a monotone additive map is extremal among additive monotone maps if and only if it is a scalar multiple of a homomorphism. Results of this type have been known for a long time~\cite{BLP,BSS},~\cite[Proposition~12.33]{schmudgen}.
\item Although our \Cref{main} is not concerned with sums of squares, they do naturally come up at~\eqref{convex_order} and after. Something similar has been known to happen for Archimedean preprimes, which approximately contain every sum of squares~\cite[Proposition~2]{krivine}.
\end{enumerate}

\section{A new Positivstellensatz for preordered semirings}

A \emph{semiring} is a set equipped with a commutative monoid structure called \emph{multiplication}, which distributes over another commutative monoid structure called \emph{addition}. Thus a semiring is like a ring, except in that additive inverses do generally not exist; due to the absence of \emph{n}egatives, semirings are also sometimes called \emph{rigs}.

If $S$ and $T$ are semirings, then a \emph{semiring homomorphism} from $S$ to $T$ is a map $f : S \to T$ which preserves addition and multiplication, $f(x + y) = f(x) + f(y)$ and $f(xy) = f(x) f(y)$, as well as the neutral elements, $f(0) = 0$ and $f(1) = 1$.

\begin{defn}
A semiring $S$ has \emph{characteristic zero} if the unique homomorphism $\N \to S$ is injective.
\end{defn}

For example, $\N$ itself or $\R_+$ are semirings of characteristic zero. The protagonists of this paper are semirings which carry an additional order structure.

\begin{defn}
	A \emph{preordered semiring} is a semiring $S$ with a preorder relation ``$\geq$'' such that
	\[
		x\geq y \qquad \Longrightarrow \qquad x + z \geq y + z \quad \&\quad xz \geq yz, 
	\]
\end{defn}

Although we will not consider this part of the definition, our results will also assume that $1 \geq 0$ in $S$. In this case, we necessarily have $x \geq 0$ for all $x\in S$, since $1 \geq 0$ implies $1 \cdot x \geq 0 \cdot x$.

If $S$ and $T$ are preordered semirings, then a homomorphism $f : S \to T$ is \emph{monotone} if it is order-preserving, meaning that $x \geq y$ in $S$ implies $f(x) \geq f(y)$ in $T$. Further, $f$ is an \emph{order embedding} if $x \geq y$ is equivalent to $f(x) \geq f(y)$. For example, the unique homomorphism $\N \to S$ is monotone for every preordered semiring $S$ with $1 \ge 0$, but not necessarily an order embedding.

\begin{rem}
	\label{equiv}
	We now explain the relation between preordered semirings and Marshall's \emph{$T$-modules}~\cite[Chapter~5]{marshall}; readers not interested in this comparison may directly move ahead to \Cref{strassen_preordered}.

	A \emph{preprime} $T$ in a ring $R$ is a subset $T \subseteq R$ which is a subsemiring,\footnote{Marshall also requires $\Q_+ \subseteq T$, which is convenient for his purposes and essentially without loss of generality. But since this extra condition is not relevant in our context, we have decided to omit this extra condition, as have previous authors (e.g.~\cite{dubois}).}
	\[
		T + T \subseteq T, \qquad TT \subseteq T, \qquad 0,1 \in T.
	\]
	Given a preprime $T$, a \emph{$T$-module} $M$ is another subset $M \subseteq R$ such that
	\[
		M + M \subseteq M, \qquad TM \subseteq M, \qquad 0,1 \in M.
	\]
	For example if $T \subseteq R$ is the subsemiring of sums of squares, then the $T$-modules are exactly the \emph{quadratic modules} in $R$.

	Given a $T$-module $M$, a preprime $T$ becomes a preordered semiring if we define $x \ge y$ to hold if and only if $x - y \in M$; the relevant conditions are straightforward to verify. This preordered semiring satisfies $1 \ge 0$, and by definition is \emph{order-cancellative}, i.e.~it satisfies
	\[
		x + z \ge y + z \quad \Longrightarrow \quad x \ge y.
	\]
	If the preprime $T$ satisfies $R = T - T$, then $T$ considered as a preordered semiring contains all the information about $M$: an arbitrary element $x \in R$ is in $M$ if and only if, upon writing it as $x = x_1 - x_2$ with $x_1, x_2 \in T$, we have $x_1 \ge x_2$.

	Conversely, suppose that $S$ is a preordered semiring with $1 \ge 0$ and that $S$ is order-cancellative. Then let us write $x \approx y$ in $S$ if $x \ge y$ and $y \ge x$; it is easy to see that $\approx$ is an equivalence relation, and that addition and multiplication of equivalence classes is well-defined. We also have that $x + z \approx y + z$ implies $x \approx y$. Thus the quotient $S / \!\approx$ is a preordered semiring which is both cancellative and order-cancellative. We can now take $R := S \otimes \Z$, defined via the Grothendieck construction as the set of all formal differences of elements of $S / \!\approx$. The image of $S$ in $R$ defines a preprime $T \subseteq R$. If we moreover define
	\[
		M := \{x - y \in R \mid x,y \in S \:\land\: x \ge y\},
	\]
	then it is easy to see that $M$ is a $T$-module. In this way, we can associate a preprime $T$ and a $T$-module to every preordered semiring.

	It is also routine to show that the two constructions from the previous two paragraphs are inverses of each other in the following sense. If $M$ is a $T$-module in a ring $R$ and $R = T - T$, then the preordered semiring $T$ is such that $T \otimes \Z \cong R /(M \cap -M)$ as rings, with $T$-module given by $M / (M \cap -M)$. Thus up to dividing by the (largely irrelevant) ``null space'' $M \cap -M$, our construction recovers the $T$-module from the associated preordered semiring. Conversely if $S$ is an order-cancellative preordered semiring with $1 \ge 0$, then $S/\!\approx$ is order isomorphic to the preprime $S/\!\approx \:\subseteq\: S \otimes \Z$. Thus also in this case, our construction recovers the preordered semiring from the associated $T$-module, at least up to identifying elements which are equivalent in the order anyway.

	Hence in this sense, ordered-cancellative preordered semirings with $1 \ge 0$ and $T$-modules for preprimes $T \subseteq R$ with $R = T - T$ are essentially equivalent structures. Since our results assume $1 \ge 0$, the main difference between our setup and Marshall's setup of $T$-modules is that we do not require order-cancellativity.
\end{rem}

We now return to the theory of preordered semirings. Strassen has considered the following condition, even if not under that name:

\begin{defn}
\label{strassen_preordered}
A preordered semiring $S$ with $1 \ge 0$ is \emph{Strassen preordered} if the homomorphism $\N\to S$ is an order embedding, and for every nonzero $x\in S$ there is $\ell\in\N$ with
\beq\label{boundedness}
	\ell x \geq 1,\qquad \ell \geq x.
\eeq
\end{defn}

We think of these two inequalities as a boundedness requirement: every element is both upper bounded and lower bounded by a scalar. It is thus intuitively analogous to Archimedeanicity for ordered rings and quadratic modules. The extra condition that $\N\to S$ must be an order embedding is stronger than having characteristic zero; for example, $\N$ itself equipped with the preorder in which all elements are comparable has characteristic zero. But as we will see, having characteristic zero is perfectly sufficient for Strassen's Positivstellensatz to hold. The boundedness requirement~\eqref{boundedness} is the essential part of Strassen's condition.

\begin{rem}
	Note that Strassen's original formulation of~\eqref{boundedness} rather goes like this: for every two nonzero $x,y\in S$, there is $k\in\N$ with $k x \geq y$. This trivially implies our formulation~\eqref{boundedness} upon taking $x = 1$ or $y = 1$ and $\ell = k$. Conversely, our formulation implies Strassen's: if $\ell x \geq 1$ and $\ell' \geq y$, then we obtain $(\ell \ell') x \geq y$.
\end{rem}

We now present Zuiddam's improved version~\cite[Theorem~2.2]{zuiddam} of Strassen's Positivstellensatz~\cite[Corollary~2.6]{strassen}.

\begin{thm}[Strassen, Zuiddam]
\label{spss}
Let $S$ be a Strassen preordered semiring. Then for any $x,y\in S$, the following are equivalent:
\begin{enumerate}
\item $f(x) \geq f(y)$ for every monotone semiring homomorphism $f : S \to \R_+$.
\item\label{asymp_order_spss} For every $\eps > 0$ there are $k,n\in\Nplus$ such that $k\leq \eps n$ and
\[
	2^k x^n \geq y^n.
\]
\end{enumerate}
\end{thm}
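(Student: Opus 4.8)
The plan is to dispatch the easy implication directly and to prove the hard one by a Hahn--Banach separation in a suitable ordered vector space, using the boundedness condition~\eqref{boundedness} at the two crucial points. To see that \Cref{asymp_order_spss} implies~(a), I would take a monotone homomorphism $f : S \to \R_+$ and, for each $\eps > 0$, apply $f$ to $2^k x^n \ge y^n$ and extract $n$-th roots, obtaining $2^{k/n} f(x) \ge f(y)$ with $k/n \le \eps$; letting $\eps \to 0$ gives $f(x) \ge f(y)$, the case $f(x) = 0$ being immediate. For the converse I argue by contraposition: assuming the condition of \Cref{asymp_order_spss} fails for a fixed $\eps_0 > 0$, I will produce a monotone homomorphism $f$ with $f(x) < f(y)$. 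The degenerate cases $x = 0$ or $y = 0$ can be handled separately — using that a nonzero element has strictly positive image under every monotone homomorphism (a consequence of~\eqref{boundedness}) together with the existence of such a homomorphism, obtained below — so I may assume $x, y \neq 0$.

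The first step is to \emph{linearize}: pass from $S$ to the $\R$-vector space $V$ produced by a Grothendieck-type construction as in \Cref{equiv}, equipped with the convex cone $C$ generated by the images of all differences $u - v$ with $u \ge v$ in $S$. The key structural fact — and the first place where~\eqref{boundedness} is essential — is that $1$ is an \emph{order unit} of $(V,C)$, simply because every element of $S$ is bounded above by a scalar; this equips $V$ with an order-unit topology and lets me form the closed cone $\overline{C}$. The heart of the whole argument is then the identification of $\overline{C}$ with the relaxed order: $u - v \in \overline{C}$ if and only if for every $\delta > 0$ there are $k,n \in \Nplus$ with $k \le \delta n$ and $2^k u^n \ge v^n$. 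The nontrivial direction is ``$\Rightarrow$'': membership $u - v \in \overline{C}$ unwinds to an inequality $q u + p \cdot 1 \ge q v$ in $S$ (up to a bounded additive slack term when $S$ is not order-cancellative) for rationals $p/q = \eps$ as small as we wish; raising it to the $n$-th power, expanding binomially, and using $u^{n-j} \le \ell^j u^n$ — which follows from $\ell u \ge 1$ — one gets $v^n \le (1 + \ell\eps)^n u^n \le 2^k u^n$ with $k/n \to 0$ as $\eps \to 0$. In particular the failure of \Cref{asymp_order_spss} for $\eps_0$ yields $x - y \notin \overline{C}$.

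Now I would apply Hahn--Banach in the order-unit topology to separate $x - y$ from the closed, convex, proper cone $\overline{C}$ (properness being a routine consequence of $\N \to S$ being an order embedding), obtaining a monotone linear functional $\phi$ with $\phi(y) > \phi(x)$, normalized so that $\phi(1) = 1$. Restricting functionals of this kind to $S$ identifies them with the additive monotone maps $g : S \to \R_+$ satisfying $g(1) = 1$; this set $\mathcal{M}$ is convex, is weak-$*$ compact by~\eqref{boundedness} (each $g(a)$ is confined to a compact interval), and is nonempty by the very Hahn--Banach argument applied to any point outside $C$. Finally I invoke the classical fact that the extreme points of $\mathcal{M}$ are exactly the monotone semiring homomorphisms $S \to \R_+$: for extreme $g$, the map $g_a := g(a \cdot {-})$ satisfies $g_a \le \ell g$ by~\eqref{boundedness}, hence equals $\lambda_a g$ for a scalar $\lambda_a$, and $a \mapsto \lambda_a$ is then the desired homomorphism (cf.~\cite{BLP,BSS},~\cite[Proposition~12.33]{schmudgen}); in particular such homomorphisms exist, as was used for the degenerate cases. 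Since $g \mapsto g(y) - g(x)$ is affine and continuous on $\mathcal{M}$ and strictly positive at $\phi$, it is strictly positive at some extreme point, that is, at a monotone homomorphism $f$ with $f(x) < f(y)$, contradicting~(a).

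The step I expect to be the main obstacle is the passage ``from large powers to the closure'' of the second paragraph: reconciling the multiplicative, power-theoretic statement of \Cref{asymp_order_spss} with the additive, topological statement $u - v \in \overline{C}$ is exactly what forces the factors $2^k$ with $k = o(n)$ to appear, and it is the one place where the boundedness hypothesis is genuinely indispensable — without an upper scalar bound on $u$ the binomial estimate collapses. A secondary technical nuisance is keeping the Grothendieck construction, and in particular that bounded slack term, under control when $S$ fails to be order-cancellative.
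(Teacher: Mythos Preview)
Your outline is sound and indeed tracks the functional-analytic core of the paper's own argument: pass to a vector space, identify a suitable closure of the positive cone, separate by Hahn--Banach, and use that the extreme monotone additive functionals are exactly the monotone semiring homomorphisms. The paper, however, does not prove \Cref{spss} directly. It establishes the much more general \Cref{main} by first treating the semifield case (\Cref{semifield_case}, where the locally convex topology is built from the neighbourhoods $\Nbhd_\eps$ rather than from an order unit, and the closure is characterized \emph{additively} as in~\eqref{approx_order_ineq}) and then specializes to $u = 1$ in \Cref{spss_better}. Your route is the natural streamlining of that argument when $1$ is an order unit; what you gain is directness, what the paper gains is that the same machinery yields the polynomial-growth generalization and the two additional characterizations~\ref{closure_order_new} and~\ref{cat_order_new}.

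There is one genuine gap in your sketch, precisely at the point you flag as the main obstacle. From $qu + p \ge qv$ and $\ell u \ge 1$ you correctly obtain, after raising to the $n$-th power and bounding $u^{n-j} \le \ell^j u^n$, the inequality $(q + p\ell)^n u^n \ge q^n v^n$ in $S$. But you then write this as $(1 + \ell\eps)^n u^n \ge v^n$, which silently divides by the integer $q^n$; in a semiring that is not legitimate, and the conclusion $2^k u^n \ge v^n$ does not follow directly. The repair is exactly the manoeuvre carried out in the paper's proof of \Cref{spss_better}: round the larger scalar up to a multiple $jB$ of the smaller one $B$, chain $jB\,u^n \ge B\,v^n$ to $j^m B\,u^{nm} \ge B\,v^{nm} \ge v^{nm}$, and then observe that $j^m B \le (1 + \delta)^{nm}$ for $m$ large. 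Once this is inserted your argument goes through; the handling of the additive slack in the non--order-cancellative case can likewise be completed by the averaging trick at the end of the proof of \Cref{semifield_case}.
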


\begin{rem}
\label{asymptotic_no}
Since property~\ref{asymp_order_spss} involves a limit of large powers, the preorder characterized by the two equivalent conditions has been called the \emph{asymptotic preorder}. Our upcoming strengthening in \Cref{spss_better}, consisting of the additional equivalent conditions \ref{closure_order_new} and \ref{cat_order_new}, suggests that this may not be such a fitting name after all. From our perspective, it behaves more like a change of base to $\R_+$, combined with a topological closure.
\end{rem}

As mentioned before, our main result is not only a strengthening of \Cref{spss}, but also a generalization to a wider class of preordered semirings. Namely, we relax the above boundedness condition to a polynomial growth condition, where polynomial growth is measured relative to a fixed element:

\begin{defn}
\label{univ_defn}
Let $S$ be a preordered semiring with $1 \ge 0$. An element $u\in S$ with $u\geq 1$ is:
\begin{enumerate}
\item \emph{polynomially universal} if for every nonzero $x\in S$ there is a polynomial $p\in\N[X]$ with
\beq
\label{poly_univ}
	p(u)\, x \geq 1,\qquad p(u) \geq x.
\eeq
\item \emph{power universal} if for every nonzero $x\in S$ there is $k\in\N$ with
\beq
\label{power_univ}
	u^k\, x \geq 1, \qquad u^k \geq x.
\eeq
\end{enumerate}
If $S$ has either type of universal element, then we say that $S$ is \emph{of polynomial growth}.
\end{defn}

In the context of ordered rings, conditions of this type have (as far as we know) first been considered by Marshall~\cite{marshall2}.

\begin{rem}
\label{poly_vs_power}
Clearly a power universal element is trivially also polynomially universal. Conversely, if $u$ is polynomially universal, then $2u$ is power universal. This is because $u\geq 1$ implies that $p(u) \leq p(1) \, u^{\deg(p)}$, and both the scalar $p(1)$ and the power $u^{\deg(p)}$ are dominated by a suitably large power of $2u$. Thus the distinction between polynomially universal and power universal elements is rather inessential.
\end{rem}

\begin{rem}
Strassen's boundedness condition of \Cref{strassen_preordered} holds if and only if $u = 1$ is polynomially universal, or equivalently if any natural number $u \in \N$ with $u \geq 2$ is power universal.
\end{rem}

\begin{rem}
\label{check_gens}
The set of all nonzero $x\in S$ which satisfy the polynomial universality condition is closed under addition and multiplication. It is therefore enough to check the condition on a set of generating elements.
\end{rem}

Our main result is as follows:

\begin{thm}
\label{main}
Let $S$ be a preordered semiring of characteristic zero and satisfying $1 \ge 0$, and suppose that $u \in S$ is polynomially universal. Then for every nonzero $x,y\in S$, the following are equivalent:
\begin{enumerate}
\item\label{hom_order} $f(x) \geq f(y)$ for every monotone semiring homomorphism $f : S \to \R_+$.
\item\label{closure_order} For every $r\in\R_+$ and $\eps > 0$, there exist a polynomial $p\in\N[X]$, a nonzero element $z\in S$ and $m\in\Nplus$ such that $p(r) \leq \eps\, m$ and
\beq
\label{closure_order_eq}
	m\, z\, x + p(u) \, z \geq m\, z\, y.
\eeq
\item\label{cat_order} For every $r\in\R_+$ and $\eps > 0$, there exist a polynomial $p\in\N[X]$, a nonzero element $z\in S$ and $m\in\Nplus$ such that $p(r) \leq (1 + \eps) m$ and
\beq
\label{cat_order_eq}
	p(u)\, z\, x \geq m\, z\, y.
\eeq
\item\label{asymp_order} For every $r\in\R_+$ and $\eps > 0$, there exist a polynomial $p\in\N[X]$ and $n,m\in\Nplus$ such that $p(r) \leq (1 + \eps)^n m$ and
\beq
\label{asymp_order_ineq}
	p(u)\, x^n \geq m \, y^n. 
\eeq 
\end{enumerate}
\end{thm}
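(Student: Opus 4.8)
The plan is to prove the cycle of implications $\ref{hom_order} \Rightarrow \ref{closure_order} \Rightarrow \ref{cat_order} \Rightarrow \ref{asymp_order} \Rightarrow \ref{hom_order}$, with the hard analytic content concentrated in the first arrow. The implications among \ref{closure_order}, \ref{cat_order} and \ref{asymp_order} should be comparatively soft: from \ref{cat_order} one passes to \ref{closure_order} by rewriting $p(u)\,z\,x \geq m\,z\,y$ as $m\,z\,x + (p(u) - m)\,z\,x \geq m\,z\,y$ when $p(u) \geq m$ (using $x \geq 1$-type bounds obtained from polynomial universality, at the cost of enlarging $p$ and $z$), and conversely from \ref{closure_order} one absorbs the additive slack $p(u)\,z$ into a product after multiplying through by a suitable power of $u$ and using $p(u)\,z\,y \geq p(u)\,z$ — in each case the loss is a constant factor $1+\eps$, which is why the two bounds $p(r) \leq \eps m$ and $p(r) \leq (1+\eps)m$ differ. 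The step \ref{cat_order} $\Leftrightarrow$ \ref{asymp_order} is a standard ``tensor power trick'': iterating \ref{cat_order} along the element $z$ and collecting powers of $u$ turns the linear-in-$m$ bound into the exponential-in-$n$ bound $p(r) \leq (1+\eps)^n m$ and replaces $z$ by $x^{n-1}$, and the reverse direction is the observation that an inequality between $n$-th powers is a special case of \ref{cat_order} with $z = x^{n-1}$. For \ref{asymp_order} $\Rightarrow$ \ref{hom_order}: apply any monotone homomorphism $f : S \to \R_+$, pick $r$ large enough that $f(u) \leq r$ (possible only if $f(u) < \infty$, which holds because $u \geq 1$ forces $f(u) \geq 1$ and... wait — one must take $r = f(u)$ directly, so $p(f(u)) = p(r)$), obtaining $p(r) f(x)^n \geq m f(y)^n$, hence $f(x)/f(y) \geq (m/p(r))^{1/n} \geq (1+\eps)^{-1} \cdot p(r)^{-1/n} \cdot p(r)^{1/n}$... more carefully $p(r)^{1/n} \to 1$, so taking $n \to \infty$ with $\eps$ fixed and then $\eps \to 0$ yields $f(x) \geq f(y)$; the degenerate cases $f(x) = 0$ or $f(y) = 0$ are handled separately.

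The substantial step is \ref{hom_order} $\Rightarrow$ \ref{closure_order}. Here I would argue by contraposition: assuming \ref{closure_order} fails for some $r$ and $\eps$, I want to produce a monotone homomorphism $f : S \to \R_+$ with $f(x) < f(y)$. The negation of \ref{closure_order} says that the ``$u$-adically perturbed, localized'' preorder — the smallest preordered-semiring preorder containing $\geq$ together with the relation $m\,x + p(u) \geq m\,y$ normalized so that $p(r) > \eps m$ — does \emph{not} relate $x$ and $y$; equivalently, a certain element fails to lie in a certain cone. The strategy is to separate: build a locally convex topology on (a suitable extension of) $S$ using the neighbourhood basis $\Nbhd_\eps$ alluded to in the excerpt, show that the relevant cone $C^*$ is closed and is the union $\bigcup_{\eps>0} C^*_\eps$ of faces with compact cap (this is the Krein–Milman input flagged in item (4) of the literature discussion), apply Hahn–Banach / Krein–Milman to extract an extreme ray of the dual cone, and then invoke the characterization (item (5)) that an extremal monotone additive functional is a scalar multiple of a semiring homomorphism. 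The polynomial growth / polynomial universality of $u$ is what guarantees that everything is controlled: $p(u)$-type bounds give the requisite compactness of the caps, and the parameter $r$ in \ref{closure_order} is precisely the value $f(u)$ that the constructed homomorphism will assign to $u$, so that ranging over all $r \in \R_+$ captures all homomorphisms (including those with $f(u)$ large). One passes through the semifield case (\Cref{semifield_case}) first — invert the nonzero elements — and then descends back to $S$.

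The main obstacle I anticipate is exactly this extraction of a homomorphism from an extreme point: showing that the cone $C^*$ has enough extreme rays to separate $x$ from $y$ requires the compact-cap structure, and this in turn forces a careful choice of the locally convex topology — too weak and the cone is not closed, too strong and the caps are not compact. Getting the bookkeeping right so that the separating functional is genuinely multiplicative (not merely additive and monotone) is where the real work lies, and it is presumably why the proof needs the passage to a semifield, the family of topologies indexed by $\eps$, and the extremality-implies-multiplicativity lemma as separate ingredients rather than a single Hahn–Banach application. The role of ``characteristic zero'' (weaker than Strassen's order-embedding hypothesis) enters at the end, ensuring that the homomorphism $\N \to \R_+$ one obtains is the standard inclusion and hence that the numerical comparisons $p(r) \leq \eps m$ transfer faithfully.
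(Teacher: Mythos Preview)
Your identification of the hard step is correct and matches the paper: the analytic content is concentrated in \ref{hom_order} $\Rightarrow$ \ref{closure_order}, and it is indeed proved by passing to the semifield of fractions, setting up a locally convex topology via the neighbourhood sets $\Nbhd_\eps$, writing the dual cone $C^*$ as a union of faces $C^*_\eps$ with compact cap, applying Krein--Milman, and showing that extremal monotone additive functionals are (up to scalar) semiring homomorphisms. That part of your sketch is on target.

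Where your proposal diverges from the paper is in the organization of the ``soft'' implications, and your sketches of these do not quite work.  The paper does \emph{not} run a cycle \ref{hom_order} $\Rightarrow$ \ref{closure_order} $\Rightarrow$ \ref{cat_order} $\Rightarrow$ \ref{asymp_order} $\Rightarrow$ \ref{hom_order}. Instead it shows each of \ref{closure_order}, \ref{cat_order}, \ref{asymp_order} $\Rightarrow$ \ref{hom_order} directly (take $r = f(u)$, apply $f$, use $f(z)>0$, let $\eps\to 0$), then derives \emph{both} \ref{closure_order} and \ref{cat_order} from \ref{hom_order} via the semifield theorem, and finally proves \ref{cat_order} $\Rightarrow$ \ref{asymp_order} by chaining $p(u)^n z x^n \ge m^n z y^n$ and absorbing $z$ into $u^{2k}$.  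Your proposed direct route \ref{closure_order} $\Rightarrow$ \ref{cat_order} in $S$ (``absorb the additive slack $p(u)z$ into a product'') is precisely the step that the paper can only carry out in the semifield, using $u^k \ge x^{-1}$; without inversion there is no obvious way to turn $m z x + p(u) z \ge m z y$ into something of the form $q(u) z' x \ge m' z' y$. Likewise your \ref{cat_order} $\Rightarrow$ \ref{closure_order} sketch via ``$p(u) - m$'' fails because $p - m$ need not lie in $\N[X]$, and your claimed \ref{asymp_order} $\Rightarrow$ \ref{cat_order} (``$z = x^{n-1}$'') does not give an inequality of the required shape. None of these are needed once you follow the paper's hub-and-spoke structure, so the overall strategy survives, but the internal wiring you propose among \ref{closure_order}--\ref{asymp_order} would not go through as written.
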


We present the proof in \Cref{proofs}. 

\begin{rem}
The equivalence of~\ref{hom_order} and~\ref{asymp_order} is somewhat reminiscent of the classical fact that for a ring $R$, an element of $R$ vanishes when considered as a residue-field-valued function on $\Spec{R}$ if and only if it is nilpotent.
\end{rem}

\begin{rem}
	In \ref{closure_order}--\ref{asymp_order}, the polynomial $p$ is monotonically increasing in $r$. Therefore large $r$ is the regime of interest. For the same reason, these conditions encode pointwise convergence: as $\eps \to 0$ and $r\to \infty$, the polynomial $m^{-1} p$ from \ref{closure_order} converges to zero pointwise; and similarly, the polynomial $m^{-1} p$ from \ref{cat_order} and the function $\sqrt[n]{m^{-1} p}$ from \ref{asymp_order} must converge to $1$ in both the pointwise topology on functions and the topology of compact convergence as $\eps \to 0$ and $r \to \infty$.
\end{rem}

\begin{rem}
\label{Ralg}
In the case that $S$ happens to be an algebra over $\R_+$, with $\R_+\subseteq S$ carrying the usual order, then we can clearly take all polynomials $p$ in the statement to be in $\R_+[X]$ and restrict to $m = 1$ in \ref{closure_order}--\ref{asymp_order}. Furthermore, every monotone homomorphism $S\to\R_+$ is then automatically $\R_+$-linear, due to the monotone case of Cauchy's functional equation. The same is true with $\Q_+$ in place of $\R_+$.
\end{rem}

\begin{rem}[{\cite[Lemma~5.4]{ocm}}]
	In some cases, condition~\ref{asymp_order} may hold with $\eps = 0$, i.e.~there may be $n\in\N$ with $x^n \geq y^n$. Then also condition~\ref{cat_order} holds with $\eps = 0$, since $z := \sum_{k=1}^n x^{k-1} y^{n-k}$ is a nonzero element satisfying $xz \geq yz$,
	\[
		xz = x^n + \sum_{k=1}^{n-1} x^k y^{n-k} \geq y^n + \sum_{k=1}^{n-1} x^k y^{n-k} = yz.
	\]
\end{rem}

\begin{rem}
	Since one is typically only interested in nonzero elements in applications, we do not think that the restriction to \emph{nonzero} $x,y\in S$ in \Cref{main} poses much of a problem.
\end{rem}

As a pure strengthening of Strassen's \Cref{spss}, we therefore have the following:
 
\begin{cor}
\label{spss_better}
Let $S$ be a Strassen preordered semiring. Then for nonzero $x,y\in S$, the following are equivalent:
\begin{enumerate}
	\item\label{hom_order_new} $f(x) \geq f(y)$ for every monotone semiring homomorphism $f : S \to \R_+$.
\item\label{closure_order_new} For every $\eps > 0$, there are $m,n\in\Nplus$ and nonzero $z\in S$ such that $m \leq \eps n$ and
\[
	n\, z\, x + m\, z \geq n\, z\, y.
\]
\item\label{cat_order_new} For every $\eps > 0$, there are $m,n\in\Nplus$ and nonzero $z\in S$ such that $\frac{m}{n} \leq 1 + \eps$ and
\[
	m\, z\, x \geq n\, z\, y.	
\]
\item\label{asymp_order_new} For every $\eps > 0$ there are $k,n\in\Nplus$ such that $k\leq \eps n$ and
\[
	2^k \, x^n \geq y^n.
\]
\end{enumerate}
Moreover, if $f(x) > f(y)$ for every monontone semiring homomorphism $f : S \to \R_+$, then there are nonzero $z,w\in S$ with $zx + w \geq zy + w$.
\end{cor}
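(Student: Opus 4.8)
\textit{Proof plan for the final assertion.} The plan is to promote the pointwise hypothesis ``$f(x) > f(y)$ for every $f$'' to a uniform inequality $f(x) \ge (1+\eps)\,f(y)$ by a compactness argument, and then to feed a rational slack $p/q \le 1+\eps$ into the already-established equivalence \ref{hom_order_new}$\Leftrightarrow$\ref{cat_order_new} applied to the rescaled pair $(qx,py)$.

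First I would consider the set $K$ of all monotone semiring homomorphisms $S \to \R_+$, topologized by pointwise convergence. Strassen's boundedness condition~\eqref{boundedness} forces $f(s) \in [\ell_s^{-1}, \ell_s]$ for each nonzero $s \in S$ and each $f \in K$ (with $\ell_s$ as in~\eqref{boundedness}), and $f(0) = 0$; hence $K$ is a closed subspace of a product of compact intervals and so is compact. Two consequences matter: every $f \in K$ is strictly positive on nonzero elements, in particular $f(y) > 0$, so $f \mapsto f(x)/f(y)$ is a well-defined continuous function on $K$; and since this function is everywhere $> 1$ by hypothesis, it attains a minimum $> 1$ on the compact space $K$, yielding some $\eps > 0$ with $f(x) \ge (1+\eps)\,f(y)$ for all $f \in K$ (trivially so if $K = \emptyset$).

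Next I would fix positive integers $q < p$ with $p/q \le 1+\eps$, so that $f(qx) = q\,f(x) \ge q(1+\eps)\,f(y) \ge p\,f(y) = f(py)$ for every $f \in K$. Both $qx$ and $py$ are nonzero, because in a Strassen preordered semiring every positive integer multiple $ks$ of a nonzero element $s$ is nonzero: otherwise $\ell_s(ks) = k(\ell_s s) \ge k$ would give $0 \ge k \ge 1$, contradicting that $\N \to S$ is an order embedding. Applying the implication \ref{hom_order_new}$\Rightarrow$\ref{cat_order_new} to the pair $(qx,py)$ with a slack parameter $\eps' > 0$ small enough that $(1+\eps')\,q/p < 1$, I obtain $m,n \in \Nplus$ and nonzero $z \in S$ with $mz(qx) \ge nz(py)$ and $mq < np$. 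Writing $a := mq < b := np$, this reads $(az)x \ge b(zy) = a(zy) + (b-a)(zy) \ge a(zy)$, using $(b-a)(zy) \ge 0$. Hence $z' := az$ --- again nonzero by the multiple-of-nonzero argument --- satisfies $z'x \ge z'y$, and adding $w := 1$ (which is $\ne 0$ since $S \ne \{0\}$) to both sides yields $z'x + w \ge z'y + w$.

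The one step with genuine content is the compactness of $K$ at the start: the conditions \ref{closure_order_new}--\ref{asymp_order_new} are by their very form insensitive to strict inequalities, so some compactness input is unavoidable in order to extract a quantitative gap $\eps$ from the strict hypothesis; once that gap is in hand, the remainder is bookkeeping with \ref{cat_order_new} together with the order-embedding property of $\N \to S$, the latter also being what keeps the various products nonzero.
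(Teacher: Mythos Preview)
Your argument for the final assertion is correct. Both you and the paper proceed by the same overall strategy: use compactness of the spectrum to upgrade the strict pointwise hypothesis to a uniform gap, then rescale and feed into one of the already-proven equivalences. The paper cites Strassen for the compactness; you reprove it directly from the bounds $f(s)\in[\ell_s^{-1},\ell_s]$, which is fine.

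The difference is in which equivalence gets invoked. The paper extracts an \emph{additive} gap $f(x)-f(y)\ge \ell^{-1}$, sets $x':=\ell x$ and $y':=\ell y+1$, and applies \ref{hom_order_new}$\Rightarrow$\ref{closure_order_new} with $\eps=1$ to obtain
\[
	n\,z\,\ell\,x+m\,z\ \ge\ n\,z\,(\ell y+1)\ \ge\ n\,z\,\ell\,y+m\,z,
\]
so that the padding $w=mz$ is genuinely produced by the argument. You instead extract a \emph{multiplicative} gap $f(x)\ge(1+\eps)f(y)$, rescale to $(qx,py)$ with $q<p\le(1+\eps)q$, and apply \ref{hom_order_new}$\Rightarrow$\ref{cat_order_new}; the resulting strict inequality $mq<np$ lets you absorb the discrepancy and conclude $z'x\ge z'y$ outright, so that $w$ is superfluous and you may take $w=1$. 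In that sense your route yields a marginally sharper conclusion than the statement asks for, at no extra cost.
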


\begin{proof}
	Taking $u = 1$ and $r = 1$ in \Cref{main}, it is easy to see that the various conditions given there take the form that we have written down here, since all polynomials can be taken to be scalars without loss of generality. For \ref{asymp_order_new}, this is not quite obvious: what we have is that for all $\eps > 0$ there are $n,m,\ell \in \Nplus$ such that $\ell \le (1 + \eps)^n m$ and
	\[
		\ell \, x^n \ge m \, y^n.	
	\]
	Upon increasing $n$ if necessary, we can assume that $\ell$ is a multiple of $m$, so that we have $n,m,j \in \Nplus$ such that $j \le (1 + \eps)^n$ and
	\[
		m \, j \, x^n \ge m \, y^n.
	\]
	But then we obtain
	\[
		m \, j^k \, x^{nk} \ge m \, y^{nk} \ge y^{nk}
	\]
	for all $k \in \Nplus$ by induction on $k$. Since $m j^k \le (1 + 2\eps)^{nk}$ for large enough $k$, the claim follows upon replacing $\eps$ by $\frac{\eps}{2}$ throughout.

	For the final claim, compactness of the asymptotic spectrum~\cite{strassen} implies that $f(x) - f(y)$ is bounded below by a positive scalar, say $\ell^{-1}$ for $\ell\in\Nplus$. Then with $x' := \ell x$ and $y' := \ell y + 1$, we have $f(x') \geq f(y')$ for all $f$, so that we can apply~\ref{closure_order_new} with $\eps = 1$, resulting in $m \leq n$ and
	\[
		n\, z\, \ell\, x + m\, z \geq n\, z\, (\ell\, y + 1) \geq n\, z \, \ell\, y + m\, z,
	\]
	which is of the desired form.
\end{proof}

\begin{rem}[Zuiddam, personal communication]
	The proof of our \Cref{main}, and hence also of our \Cref{spss_better}, uses tools from functional analysis. It is also possible to prove at least \Cref{spss_better} by purely algebraic means using the method of Zuiddam based on considering maximal extensions of the given preorder and observing that every maximal Strassen preorder is total~\cite[Chapter~2]{zuiddam_thesis}\footnote{This method of proof is very common in real algebra, and e.g.~analogous to the method of Becker and Schwartz for proving the Positivstellensatz of Krivine--Kadison--Dubois~\cite{BS}.}. More concretely, defining the relaxed preorder relation $x\succcurlyeq y$ to hold if condition~\ref{hom_order_new} is satisfied, then one can verify that~\cite[Lemma~2.4]{zuiddam_thesis} holds, and these are precisely the relevant properties of the relaxed preorder $\succcurlyeq$ that Zuiddam uses in order to prove the equivalence with~\ref{hom_order_new}. The same applies to the equivalence of condition~\ref{cat_order_new} with~\ref{hom_order_new}; and also to the equivalence of~\ref{asymp_order_new} with~\ref{hom_order_new}, which is the case considered by Zuiddam's.

	It is thus also a natural question whether our \Cref{main} has a purely algebraic proof. We will indeed present a further generalization of this result with a purely algebraic proof in upcoming work.
\end{rem}

\begin{rem}
	If the preoder on $S$ is moreover multiplicatively cancellative, then conditions~\ref{closure_order_new} and~\ref{cat_order_new} simplify further. In particular,~\ref{closure_order_new} becomes equivalent to: for every $n\in\N$, we have $n x + 1 \geq ny$.

	One may wonder about the relation between \Cref{spss_better} and the classical Positivstellensatz of Krivine--Kadison--Dubois~\cite{BS,dubois,krivine}, and in particular whether \Cref{spss_better} for order-cancellative $S$ immediately follows from a version of the latter upon applying a suitable version of it to the ordered ring $S \otimes \Z$ as in \Cref{equiv}. However, this is not the case, even with Marshall's general version of the theorem~\cite[Theorem~5.4.4]{marshall}. The reason is that upon the translation to $T$-modules, the assumptions of \Cref{spss_better} guarantee that the relevant $T$-module $M$ is Archimedean, but there is no guarantee that $T$ itself is Archimedean or even just a quasi-preordering in the sense of~\cite[Definition~5.4.2]{marshall}. Another way to see that \Cref{spss_better} is a different result is to note that every version of the Krivine--Kadison--Dubois theorem implies an algebraic inequality without any further ``padding'' like multiplication by a nonzero $z$ or considering large powers. And indeed the latter cannot be expected to hold under the assumptions of \Cref{spss_better}, as one can see upon considering $S := \N[X]$ preordered with respect to the smallest order-cancellative semiring preorder which satisfies $1 \le X \le 2$. Then the elements $x = 2 X^2 + 3$ and $y = 4 X$ satisfy\footnote{One way to see this is to note that the map $\N[X] \to \N$ given by $p \mapsto 2p(1) - p(0)$ is monotone with respect to the given preorder.} $x \not\ge y$ despite $f(x) > f(y)$ for all monotone homomorphisms $f$, where the latter are the evaluation maps on the interval $[1,2]$.
\end{rem}

\begin{rem}
	There may be interesting consequences of \Cref{spss_better} for graph theory, along the lines of the application of Strassen's Positivstellensatz to graph theory due to Zuiddam~\cite{zuiddam}.
\end{rem}

To finish up our general developments, we derive a \emph{rate formula} for preordered semirings analogous to the rate formula for ordered commutative monoids from~\cite{ocm}. The following definition is the analogue of~\cite[Definition~8.16]{ocm}.

\newcommand{\Rreg}[2]{R_{\mathrm{reg}}\left({#1}\to{#2}\right)}

\begin{defn}
	For a preordered semiring $S$ with $1 \ge 0$ and a polynomially universal element $u$ and nonzero $x,y\in S$, a number $\lambda\in\R_+$ is a \emph{regularized rate} from $x$ to $y$ if for every $r\in\R_+$ and $\eps > 0$ there are $m,n,\ell\in\Nplus$ with $\frac{m}{n} \geq \lambda - \eps$ and $p\in\N[X]$ with $p(r) \leq (1 + \eps)^n \ell$ such that
\[
	p(u) \, x^n \geq \ell \, y^m.
\]
We write $\Rreg{x}{y}$ for the largest regularized rate from $x$ to $y$ (which may be infinite).
\label{rate_def}
\end{defn}

Essentially by definition, the set of regularized rates is closed as a subset of $\R_+$, so that the largest regularized rate indeed exists (and is finite if the set of regularized rates is bounded).

We then have a semiring analogue of the rate formula given in~\cite[Theorem~8.24]{ocm}.

\begin{cor}
\label{rate_formula}
If $S$ satisfies the hypotheses of \Cref{main} and $x,y \geq 1$, then regularized rates can be computed as
\[
	\Rreg{x}{y} = \inf_f \frac{\log f(x)}{\log f(y)},
\]
where the infimum ranges over all monotone semiring homomorphisms $f : S \to \R_+$, with the convention $\frac{0}{0} := \infty$.
\end{cor}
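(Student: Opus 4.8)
The plan is to deduce the rate formula directly from the equivalence of conditions \ref{hom_order} and \ref{asymp_order} in \Cref{main}, after reducing the asymmetric-exponents definition in \Cref{rate_def} to the symmetric-exponents situation of \ref{asymp_order}. First I would fix nonzero $x,y \geq 1$ and set $\mu := \inf_f \frac{\log f(x)}{\log f(y)}$, with the stated convention. The key observation is that $\lambda$ being a regularized rate from $x$ to $y$ should be equivalent to condition \ref{asymp_order} holding for the pair $(x, y^{\lambda})$ in a suitable asymptotic sense; since $\lambda$ need not be rational, one works with rational approximations $\frac{m}{n} \to \lambda$ from below and uses the monotonicity $y \geq 1$ to absorb the discrepancy between $y^m$ and $y^{\lambda n}$ into the error terms controlled by $\eps$. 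Concretely, I would show:
\begin{enumerate}
\item If $\lambda \leq \mu$, then for every monotone homomorphism $f$ we have $f(x) \geq f(y)^{\lambda}$ (using $f(y) \geq 1$ and taking logs; the case $f(y) = 1$ forces $f(x) \geq 1 = f(y)^\lambda$ as well). Hence $f(x^n) \geq f(y^m)$ whenever $\frac{m}{n} \leq \lambda$, so by \Cref{main} applied to the pair $(x^n, y^m)$, condition \ref{asymp_order} furnishes the polynomial inequality required by \Cref{rate_def}. This gives $\Rreg{x}{y} \geq \mu$, using that the set of regularized rates is closed.
\item Conversely, if $\lambda$ is a regularized rate, then applying any monotone homomorphism $f$ to the defining inequality $p(u)\,x^n \geq \ell\, y^m$ and using $f(p(u)) \leq p(f(u)) \leq p(r)$ for $r := f(u)$ — valid since $f(u) \geq 1$ and $p$ has nonnegative coefficients — together with $p(r) \leq (1+\eps)^n \ell$, yields $(1+\eps)^n \ell \, f(x)^n \geq \ell \, f(y)^m$, i.e. $(1+\eps)^n f(x)^n \geq f(y)^m$. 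Taking $n$-th roots, letting $\eps \to 0$, and using $\frac{m}{n} \geq \lambda - \eps$ gives $f(x) \geq f(y)^{\lambda}$, hence $\frac{\log f(x)}{\log f(y)} \geq \lambda$ whenever $f(y) > 1$; the case $f(y) \leq 1$ is handled by the convention. Thus $\mu \geq \lambda$ for every regularized rate $\lambda$, so $\mu \geq \Rreg{x}{y}$.
\end{enumerate}

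Combining the two inequalities gives $\Rreg{x}{y} = \mu$. I expect the main obstacle to be the bookkeeping in step (1): one must convert the ``real exponent'' statement $f(x) \geq f(y)^{\lambda}$ into a genuine inequality $f(x^n) \geq f(y^m)$ between semiring elements with integer exponents, apply \Cref{main} to get the polynomial-padded inequality $p(u)\,x^n \geq \ell\,y^m$ for exponents with $\frac{m}{n} \to \lambda$, and then verify that the resulting family of inequalities matches the precise quantifier structure of \Cref{rate_def} (in particular that the error in $p(r) \leq (1+\eps)^n \ell$ can be kept uniform as the rational approximations vary). The edge cases where some $f(y) = 1$ — so that $\frac{\log f(x)}{\log f(y)} = \infty$ or $\frac 00$ — also require a little care, but are absorbed cleanly by the stated conventions since $y \geq 1$ forces $f(y) \geq 1$ throughout. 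Everything else is a routine limiting argument of the same flavour as the proof of \Cref{spss_better} from \Cref{main}.
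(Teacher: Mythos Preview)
Your proposal is correct and follows essentially the same approach as the paper's own proof: both directions are obtained by applying the equivalence \ref{hom_order}$\Leftrightarrow$\ref{asymp_order} of \Cref{main} to the pair $(x^n,y^m)$ for a rational approximant $\tfrac{m}{n}\in[\lambda-\eps,\lambda]$, and then passing to the limit $\eps\to 0$. One small slip: in step~(b) you write $f(p(u))\le p(f(u))$ with a justification involving monotonicity, but since $f$ is a semiring homomorphism this is simply an equality $f(p(u))=p(f(u))=p(r)$; the argument goes through unchanged.
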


The improvement over the rate formula of~\cite{ocm} is that the functions that need to be optimized over only comprise those monotone maps which preserve \emph{both} kinds of algebraic structure, which is a much stronger condition than merely the preservation of one of them. The reason that the logarithm appears is that the maps $\log f$ are homomorphisms from the multiplicative monoid of $S$ to the monoid $(\R,+)$, thereby putting them into the picture of~\cite{ocm}.

\begin{proof}
We show that $\lambda\in\R_+$ is a regularized rate if and only if $\log f(x) \geq \lambda \log f(y)$ for all monotone semiring homomorphisms $f : S \to \R_+$, from which the claim follows immediately.

In one direction, if $\lambda$ is a regularized rate, then we choose $m,n\in\Nplus$ and $p$ for given $\eps > 0$ as in Definition~\ref{rate_def}, using $r = f(u)$ as before. The assumptions then show that $(1 + \eps)^n \ell f(x)^n \geq \ell f(y)^m$ for any given $f$, and therefore $\log(1 + \eps) + \log f(x) \geq (\lambda - \eps) \log f(y)$. The claim $\log f(x) \geq \lambda \log f(y)$ follows in the limit $\eps \to 0$.

Conversely, suppose that $\log f(x) \geq \lambda \log f(y)$ for all $f$, and let $r\in\R_+$ and $\eps > 0$ be given. Let us choose an arbitrary rational $\frac{m}{n} \in [\lambda-\eps,\lambda]$. Then we have $f(x^n) \geq f(y^m)$ for all monotone semiring homomorphisms $f : S \to \R_+$, so that we get $p\in\N[X]$ and $k,\ell\in\Nplus$ with $p(r) \leq (1 + \eps)^k \ell$ and $p(u) \, x^{kn} \geq \ell \, y^{km}$ from \Cref{main}. Since in particular $p(r) \leq (1 + \eps)^{k n} \ell$ and $\frac{km}{kn} \geq \lambda - \eps$, we are done.
\end{proof}

We end the discussion of our main results with a remark and an open problem.

\begin{rem}
\label{asymp_mono}
One may hope that if $u$ is power universal, then the polynomial in~\eqref{asymp_order_ineq} can be taken to be a monomial $u^k$ with $k$ sublinear in $n$. However, this is not possible; the following example was pointed out to us by Peter Vr\'ana. Consider again the semiring $\N[X]$, now with repect to the semiring preorder in which $p \ge q$ if and only if $p(r) \ge q(r)$ for all large enough $r$. Then this makes $\N[X]$ even totally preordered with power universal element $u := 2 + X$, but there is no monotone homomorphism $\N[X] \to \R_+$. Hence condition~\ref{hom_order} of \Cref{main} holds for any two nonzero polynomials in $\N[X]$. It is easy to see that the other conditions of \Cref{main} likewise hold. However, the candidate property of $u^k p^n \ge q^n$ for large enough $n$ and $k$ sublinear in $\N$ does clearly not hold, e.g.~with $p = 1$ and $q = X$.

One may also hope to have an analogue of the final statement in \Cref{spss_better} under the assumptions of \Cref{main}. For example, if $f(x) > f(y)$ for every monotone semiring homomorphism $f : S \to \R_+$, then does it follow that there is nonzero $z \in S$ with $z\, x \geq z\, y$ on the nose, i.e.~without an additional correcting factor as in~\eqref{cat_order_eq}? The same example shows that this is not the case in general, even under stronger assumptions like $f(x) > f(u) f(y)$ for all $f$.
\end{rem}

\begin{prob}
\label{zu}
In~\eqref{closure_order_eq} and~\eqref{cat_order_eq}, under what conditions can one take $z$ to be a polynomial in $u$?
\end{prob}

\section{Proof of \Cref{main}}
\label{proofs}

A semiring is a \emph{semifield} if every nonzero element $x$ has a multiplicative inverse $x^{-1}$. A semifield of characteristic zero automatically is a $\Q_+$-algebra, and we will freely use scalar multiplication by positive rationals. Correspondingly, the Greek variables $\eps$ and $\alpha$ that we use in the following denote scalars which are assumed to be rational. Furthermore, in order for $u \geq 1$ in a preordered semifield $F$ with $1 \ge 0$ to be power universal, it is enough to check the upper bound condition, namely that for every $x\in F$ there is $k\in\N$ with $u^k \geq x$. This is the property that we use below, after possibly replacing without loss of generality the polynomially universal element $u$ by a power universal one, such as $2u$ (\Cref{poly_vs_power}).

The following result is the crucial stepping stone towards \Cref{main}.

\begin{thm}
\label{semifield_case}
Let $F$ be a preordered semifield of characteristic zero and satisfying $1 \ge 0$, and suppose that $u\in F$ is polynomially universal. Then for every nonzero $x,y\in F$, the following are equivalent:
\begin{enumerate}
\item\label{hom_order_sf} $f(x) \geq f(y)$ for every monotone semiring homomorphism $f : S \to \R_+$.
\item\label{closure_order_sf} For every $\eps > 0$, there is $m\in\N$ such that
\beq
\label{approx_order_ineq}
	 x + \sum_{j=0}^m \eps^{j+1} u^j \geq y.
\eeq
\item\label{closure_order_sf2} For every $r\in\R_+$ and $\eps > 0$, there is a polynomial $p\in\Q_+[X]$ such that $p(r) \leq \eps$ and
\beq
\label{approx_order_ineq2}
	x + p(u) \geq y.
\eeq
\item\label{cat_order_sf} For every $r\in\R_+$ and $\eps > 0$, there is a polynomial $p\in\Q_+[X]$ such that $p(r) \leq 1 + \eps$ and
\beq
\label{cat_order_sf_ineq}
	p(u)\, x \geq y.
\eeq
\end{enumerate}
\end{thm}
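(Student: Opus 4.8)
The only nontrivial implication is \ref{hom_order_sf}$\Rightarrow$\ref{closure_order_sf}; the cycle \ref{closure_order_sf}$\Rightarrow$\ref{closure_order_sf2}$\Rightarrow$\ref{cat_order_sf}$\Rightarrow$\ref{hom_order_sf} is routine. Given \ref{closure_order_sf} and $r,\eps$, pick a rational $\eps'>0$ with $\eps'r<1$ and $\tfrac{\eps'}{1-\eps'r}\le\eps$, take the corresponding $m$, and put $p(X):=\sum_{j=0}^{m}(\eps')^{j+1}X^{j}$; then $p\in\Q_+[X]$, $p(r)\le\eps$, and $x+p(u)\ge y$. Given \ref{closure_order_sf2}, polynomial universality supplies $s\in\N[X]$ with $s(u)x\ge1$, and multiplying this by $p(u)$ gives $\bigl(1+s(u)p(u)\bigr)x\ge x+p(u)\ge y$, with $1+s(r)p(r)\le1+\eps$ as soon as $p(r)$ is chosen small enough, which is \ref{cat_order_sf}. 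Finally, applying a monotone homomorphism $f$ to the inequality in \ref{cat_order_sf} with $r:=f(u)$, and using that $f$ fixes $\Q_+\subseteq F$ pointwise (so $f(p(u))=p(f(u))$), we obtain $(1+\eps)f(x)\ge f(y)$ for every $\eps>0$, hence $f(x)\ge f(y)$, which is \ref{hom_order_sf}.

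\textbf{The hard implication.} For \ref{hom_order_sf}$\Rightarrow$\ref{closure_order_sf} I would argue by contraposition, extracting from a failure of \ref{closure_order_sf} a monotone semiring homomorphism $f:F\to\R_+$ with $f(x)<f(y)$, by Hahn--Banach separation followed by a Krein--Milman extraction. Since $F$ is a $\Q_+$-algebra we may work in the $\R$-vector space $V:=\R\otimes_\Q G$, where $G$ is the Grothendieck group of $(F,+)$; passing first to the additively cancellative quotient $F/{\equiv_+}$ if necessary (monotone homomorphisms factor through it, as $\R_+$ is additively cancellative), the map $F\to V$ is injective. Let $C\subseteq V$ be the convex cone generated by the order differences $[a]-[b]$ for $a\ge b$, so that linear functionals nonnegative on $C$ are exactly the monotone additive maps $F\to\R_+$. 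Topologize $V$ by declaring a neighbourhood basis at $0$ to be the convex, symmetric, $C$-downward-closed hulls of the sets $\Nbhd_\eps:=\{\,[z]\mid z\le\sum_{j=0}^{m}\eps^{j+1}u^{j}\text{ for some }m\,\}$, $\eps>0$. After replacing $u$ by the power universal element $2u$ (\Cref{poly_vs_power}), polynomial universality --- concretely, every $a$ satisfies $a\le u^{k}$ for some $k$ --- makes each $\Nbhd_\eps$ absorbing, and $\tfrac12\Nbhd_\eps+\tfrac12\Nbhd_\eps\subseteq\Nbhd_\eps$ is immediate, so this is a genuine locally convex topology. Now assume \ref{closure_order_sf} fails; as $\sum_{j=0}^{m}\eps^{j+1}u^{j}$ increases with $\eps$, it then fails for all $\eps\le\eps_0$ for some $\eps_0>0$. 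Using $(\tfrac12)^{j+1}+(\tfrac12)^{j+1}\le1$, the closure of the convex set $A:=[x]+\Nbhd_{\eps_0/2}-C$ is contained in $[x]+\Nbhd_{\eps_0}-C$, and a short argument (reduce to rational coefficients, clear denominators, cancel additively) shows that $[y]\in[x]+\Nbhd_{\eps_0}-C$ would force $x+\sum_{j=0}^{m}\eps_0^{j+1}u^{j}\ge y$ for some $m$, contrary to assumption. Hence $[y]\notin\overline A$; separating $[y]$ from $\overline A$ yields a nonzero $\Nbhd$-continuous linear functional $\phi$ which, being bounded above on $A\supseteq[x]-C$, is nonnegative on $C$ and thus a monotone additive map, and which satisfies $\phi(x)<\phi(y)$; $\Nbhd$-continuity means $\phi$ is bounded on some $\Nbhd_\delta$.

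\textbf{From a functional to a homomorphism.} To finish, normalize $\phi$ into the cap $C^*_\delta:=\{\,\psi\in C^*\mid\psi(\sum_{j=0}^{m}\delta^{j+1}u^{j})\le1\text{ for all }m\,\}$ of the cone $C^*$ of monotone additive maps bounded on some $\Nbhd_\eps$ (which contains every homomorphism). Since $\psi(u^{j})\le\delta^{-(j+1)}$ for $\psi\in C^*_\delta$ and every $a$ satisfies $a\le u^{k}$ for some $k$, the set $C^*_\delta$ is pointwise bounded, hence weak-$*$-compact. By Krein--Milman the weak-$*$-continuous affine functional $\psi\mapsto\psi(y)-\psi(x)$ attains its maximum over $C^*_\delta$ at an extreme point $\psi_0$; a positive multiple of $\phi$ lies in $C^*_\delta$, so this maximum is positive and $\psi_0(y)>\psi_0(x)\ge0$, in particular $\psi_0\ne0$. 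Now the algebraic core: for each $a\in F$ the partition of unity $\tfrac{a}{1+a}+\tfrac{1}{1+a}=1$ in the semifield $F$ writes $\psi_0$ as the sum of $t\mapsto\psi_0\bigl(\tfrac{a}{1+a}t\bigr)$ and $t\mapsto\psi_0\bigl(\tfrac{1}{1+a}t\bigr)$, both monotone additive and dominated by $\psi_0$, hence both in the cone generated by $C^*_\delta$; extremality of $\psi_0$ forces each to be a scalar multiple of $\psi_0$. Writing each $b\in F^\times$ as $\tfrac{1+b}{1+b^{-1}}$, this propagates to $\psi_0(bt)=\mu(b)\psi_0(t)$ for all $b,t$, where $\mu:F\to\R_+$ is monotone, additive \emph{and} multiplicative; thus $\psi_0$ is a positive scalar multiple of the monotone semiring homomorphism $\mu$, and $\mu(x)<\mu(y)$ contradicts \ref{hom_order_sf}.

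\textbf{Main obstacle.} The delicate part is the second step: checking that the $\Nbhd_\eps$ really generate a locally convex vector space topology and that the relevant caps $C^*_\delta$ are weak-$*$-compact and exhaust $C^*$ --- both of which hinge on polynomial universality of $u$ --- and, most of all, setting up the cone $C$ (with the passage to the cancellative quotient, and with due care about how the closure of $A$ relates back to the order on $F$) precisely enough that a failure of \ref{closure_order_sf} genuinely puts $[y]$ outside $\overline A$, so that Hahn--Banach applies. By contrast, once a weak-$*$-compact convex set of functionals is in hand, the Krein--Milman step and the passage from an extreme point to a scalar multiple of a homomorphism via $\tfrac{a}{1+a}+\tfrac{1}{1+a}=1$ are the clean part of the argument. (A posteriori $C^*$ is the weak-$*$-closed cone generated by the homomorphisms, so the refined preorder approximately contains every sum of squares --- the phenomenon mentioned in the introduction.)
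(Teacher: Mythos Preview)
Your approach matches the paper's closely: the cycle \ref{closure_order_sf}$\Rightarrow$\ref{closure_order_sf2}$\Rightarrow$\ref{cat_order_sf}$\Rightarrow$\ref{hom_order_sf} is handled the same way, and for \ref{hom_order_sf}$\Rightarrow$\ref{closure_order_sf} both proofs set up the locally convex topology generated by the $\Nbhd_\eps$, separate via Hahn--Banach, apply Krein--Milman to a compact cap of the dual cone, and identify extreme monotone additive functionals with scalar multiples of homomorphisms via the decomposition $\psi_0(t)=\psi_0\bigl(\tfrac{a}{1+a}\,t\bigr)+\psi_0\bigl(\tfrac{1}{1+a}\,t\bigr)$.

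There is, however, one genuine gap in your cancellativity reduction. You pass to the \emph{additively} cancellative quotient so that $F\to V$ is injective, but the step you label ``cancel additively'' --- translating $[y]\in[x]+\Nbhd_{\eps_0}-C$ back to $x+\sum_{j}\eps_0^{j+1}u^j\ge y$ in $F$ --- actually requires \emph{order}-cancellativity: after clearing denominators one arrives at an inequality of the form $x+\sigma_m+w\ge y+w$ in $F$ for some $w$, and dropping $w$ is an order cancellation, which additive cancellativity does not provide. The paper treats this explicitly. It first assumes $F$ is order-cancellative and proves \ref{hom_order_sf}$\Rightarrow$\ref{closure_order_sf} in that case; then for general $F$ it replaces the given preorder by its order-cancellative closure (``$x+w\ge y+w$ for some $w$''), deduces the weakened conclusion $x+\sum_j\eps^{j+1}u^j+w\ge y+w$, and finally removes $w$ by iterating to $n\bigl(x+\sum_j\eps^{j+1}u^j\bigr)+w\ge ny+w$, dividing by $n$ in the semifield, and absorbing $n^{-1}w\le n^{-1}u^k\le\eps^{k+1}u^k$ (for $n\ge\eps^{-(k+1)}$ and $u^k\ge w$) as one further term of the geometric sum. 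Without this last maneuver your argument proves only \ref{closure_order_sf} modulo an unwanted additive catalyst $w$.
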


\begin{proof}
Condition~\ref{closure_order_sf} easily implies property~\ref{closure_order_sf2}, since~\eqref{approx_order_ineq} contains the geometric series: for given $r$, we choose $\eps < r^{-1}$ and $m$ such that~\eqref{approx_order_ineq} holds, and take $p := \sum_{j=0}^m \eps^{j+1} X^j$. Then we have $p(r) = \sum_{j=0}^m \eps^{j+1} r^j \leq \frac{\eps}{1-\eps r}$, which indeed can be made arbitrarily small.

Given~\ref{closure_order_sf2}, choose $k\in\N$ with $u^k \geq x^{-1}$, so that
\[
	(1 + u^k p(u)) \, x \geq x + x\, x^{-1}\, p(u) \geq y.
\]
Thus $\hat{p} := 1 + X^k p$ has the desired properties, since $\hat{p}(r) \leq 1 + r^k p(r)$ can be taken to be arbitrarily close to $1$ by assumption.
	
Assuming~\ref{cat_order_sf}, we apply a given $f$ as in~\ref{hom_order_sf} on both sides of the given inequality and use the homomorphism property in order to pull $f$ into the polynomial, resulting in
\[
	p(f(u))\, f(x) \geq f(y).
\]
With $r := f(u)$ and choosing $p$ as a function of $\eps$, we get $(1 + \eps)\, f(x) \geq f(y)$, and therefore $f(x) \geq f(y)$ in the limit $\eps \to 0$.

The difficult part of the proof is the final implication from \ref{hom_order_sf} to \ref{closure_order_sf}. For now, we assume that $F$ is order-cancellative, meaning that $w + x \geq w + y$ implies $x \geq y$; we will treat the general case at the end of the proof. We work with the $\Q$-vector space $V := F\otimes \Q = F - F$, and equip it with the induced preordering, which is characterized by a positive cone $C\subseteq V$ via the usual correspondence between ordered vector spaces and convex cones. For given $\eps > 0$, consider the set
\[
	\Nbhd_\eps := \left\{ \: x \in F \:\bigg|\: \exists m\in\N, \: x \leq \sum_{j=0}^m \eps^{j+1} u^j \: \right\}.
\]
We use the set $\Nbhd_\eps - \Nbhd_\eps$ as $\eps$ varies as a basis of neighbourhoods of zero in $V$. Since every $x\in F$ satisfies $x \leq u^k$ for suitable $k$, we get $x\in \eps^{-(k+1)} \Nbhd_\eps$, and it follows that $\Nbhd_\eps - \Nbhd_\eps$ is absorbent. Each set $\Nbhd_\eps - \Nbhd_\eps$ is also easily seen to be absolutely convex by convexity of $\Nbhd_\eps$. These sets form a local basis at zero since $\Nbhd_\eps \subseteq \Nbhd_{\eps'}$ for $\eps \leq \eps'$, and $\Nbhd_{\eps/2} + \Nbhd_{\eps/2} \subseteq \Nbhd_\eps$. We therefore have a locally convex topology on $V$. 

We now characterize the closure of the preorder in this topology. We have $x \geq y$ in the closure if and only if for every $\eps$, there is $x'\geq y'$ in $F$ with $x - x'\in (\Nbhd_\eps - \Nbhd_\eps)$ and $y - y' \in (\Nbhd_\eps - \Nbhd_\eps)$, which implies in particular that there are $m,n\in\N$ such that
\[
	x + \sum_{j=0}^m \eps^{j+1} u^j \geq x', \qquad y' + \sum_{j=0}^n \eps^{j+1} u^j \geq y.
\]
Therefore, assuming without loss of generality $n=m$,
\[
	x + 2 \sum_{j=0}^n \eps^{j+1} u^j \geq x' + \sum_{j=0}^m \eps^{j+1} u^j \geq y' + \sum_{j=0}^m \eps^{j+1} u^j \geq y,
\]
which implies that $x$ and $y$ satisfy the condition of~\ref{closure_order_sf}, with $2\eps$ in place of $\eps$. Conversely, if~\ref{closure_order_sf} holds, then it is easy to see that $x \geq y$ in the closure. Thus condition~\ref{closure_order_sf} characterizes the closure of the preorder in the locally convex topology generated by the sets $\Nbhd_\eps - \Nbhd_\eps$, and the associated positive cone is $\overline{C}$, the closure of the original positive cone $C$.

Now consider the dual space $V^*$, by which we mean the set of all $\Q$-linear continuous maps $V \to \R$ equipped with the weak-$*$ topology. Then $V^*$ contains a closed convex cone $C^*$, the set of all these functionals that are monotone with respect to the original preorder on $F$, or equivalently nonnegative on $C$. We claim that $C^*$ is the closed conical hull of its extreme rays. For given $\eps > 0$, consider the subcone
\[
	C^*_\eps := \{ f \in C^* \mid f(\Nbhd_\eps) \textrm{ is bounded} \}.
\]
This set is a face of $C^*$, since it is a subcone, and if $f = f_1 + f_2$ is in $C^*_\eps$ with $f_1,f_2\in C^*$, then also $f_1,f_2\in C^*_\eps$ due to monotonicity of $f_1$ and $f_2$ and the boundedness assumption on $f$.

The set
\[
	\{ f \in C^*_\eps \mid f(\Nbhd_\eps) \subseteq [0,1] \}
\]
is a cap of $C^*_\eps$, in the sense that it is a convex subset of $C^*_\eps$ containing zero as well as a positive scalar multiple of every point of $C^*_\eps$. It is compact due to Tychonoff's theorem and the boundedness assumption, using that $\Nbhd_\eps - \Nbhd_\eps$ is absorbent. By Krein-Milman, we can therefore conclude that $C^*_\eps$ is the closed convex hull of its extreme points. Since $C^* = \bigcup_\eps C^*_\eps$ writes $C^*$ as a union of faces, it follows that also $C^*$ itself is the closed conical hull of its extreme rays.
The essential conceptual idea of the proof comes now, where we will show that the extreme rays of $C^*$ are precisely\footnote{Technically, it would be enough for the proof to show that every extreme ray is a multiple of a monotone homomorphism. But the proof of the converse is instructive as well and may be of use in other contexts, which is why we nevertheless include it and prove the equivalence.} the (unique $\Q$-linear extensions of) the monotone semiring homomorphisms $F \to \R_+$. The equivalence between~\ref{closure_order_sf} and~\ref{hom_order_sf} then follows by the previous parts of the proof and the Hahn-Banach theorem for locally convex spaces: if $x \geq y$ does not hold in the closure of the order, then there is a continuous monotone linear functional which witnesses this; we can always find such a witness that is an extreme ray of $C^*$ and therefore a monotone homomorphism, contradicting assumption~\ref{hom_order_sf}.

So suppose that $f : F \to \R_+$ is monotone, $\Q$-linear, $\Nbhd_\eps$-bounded, and extremal with these properties. Fix nonzero $a\in F$. Then for every $x\in F$,\footnote{Here we are assuming that $1 + a \neq 0$, so that $1 + a$ is indeed invertible. For if $1 + a = 0$, then $1 \geq 0$ implies $0 = 1 + a \geq a$. Since $a \geq 0$ as well, this implies $0 \geq a^2 = 1$. But then also $0 \geq x$ for all $x\in F$, making all four conditions of \Cref{semifield_case} trivially true. Hence we can assume $1 + a \neq 0$ without loss of generality.}
\[
	f(x) = f \left( \frac{1}{1+a} \cdot x \right) + f \left( \frac{a}{1+a} \cdot x \right),
\]
and when considered as a function of $x$, each term is again monotone, $\Q$-linear, and continuous. (The latter requires showing that for every $b\in F$, the multiplication map $b\cdot - : V \to V$ is $\Nbhd_\eps$-continuous, which goes like this: if $0 \leq x \leq \sum_{j=0}^m \eps^{j+1} u^j$, then also $0 \leq bx \leq \sum_{j=0}^m \eps^{j+1} u^{j+k} \leq \eps^{-k} \sum_{j=0}^{m+k} \eps^{j+1} u^j$, for $k\in\N$ such that $b\leq u^k$.) Therefore each term is a scalar multiple of $f$ by the extremality assumption. Since we can assume $f$ to be nonzero, it follows that also each term on the right is nonzero (for some $x$). Thus each term is even a scalar multiple of the other: there is $r > 0$ such that for all $x\in F$,
\[
	f\left( \frac{ax}{1+a} \right) = r \, f\left( \frac{x}{1+a} \right).
\]
Since we might as well use $x(1+a)$ in place of $x$, we conclude that $f(ax) = r f(x)$ for all $x\in F$. Taking $x = 1$ shows that $f(1) > 0$, because $a$ was arbitrary and $f$ was assumed to be nonzero. Thus we have shown that $f(1) > 0$ for every extremal ray $f\in C^*_\eps$. By virtue of Choquet's theorem applied to $C^*_\eps$, we can even conclude that $f(1) > 0$ for every nonzero $f\in C^*$, which we will use below.

By the above, we have $f(a) = f(a\cdot 1) = r f(1)$, so that $r = f(1)^{-1} f(a)$, and generally $f(ax) = f(1)^{-1} f(a) f(x)$. Since $a\in F$ was arbitrary, this means that $x\mapsto f(1)^{-1} f(x)$ is a monotone semiring homomorphism, as was to be shown.

Before approaching the converse direction, we prove that for every nonzero $x\in F$, we have
\beq
\label{convex_order}
	x + x^{-1} \geq 2
\eeq
in the closure of the order, i.e.~according to condition~\ref{closure_order_sf}. Inspired by the concept of \emph{embezzlement} from~\cite{embezzling}, we work with the Laurent-polynomial expression
\[
	z := \sum_{j = -n}^{+n} \left( 1 - \frac{|j|}{n} \right) x^j
\]
for given $n\in\N$. Some computation shows that
\[
	(x + x^{-1})z - 2z = \frac{1}{n} \left( x^{-n} + x^n \right) - \frac{2}{n}.
\]
Since $x^{-n} + x^n \geq 0$, we have therefore
\[
	\frac{2}{n} z^{-1} + x + x^{-1} \geq 2.
\]
Because $z \geq 1$ due to the $j = 0$ term, we obtain
\[
	\frac{2}{n} + x + x^{-1} \geq 2,
\]
so that~\eqref{convex_order} follows in the limit $n\to \infty$. 

Next, we can also write~\eqref{convex_order} as $x^2 + 1 \geq 2x$. Applying this inequality to $\alpha^{-1} x$ in place of $x$ shows that $x^2 + \alpha^2 \geq 2 \alpha x$ for any rational $\alpha > 0$, again in the closure of the order. So for any nonzero $f\in C^*$, we therefore have
\[
	f(x^2) - 2 \alpha f(x) + \alpha^2 f(1) \geq 0,
\]
due to $\Q$-linearity and monotonicity. By continuity in $\alpha$, this inequality holds for all real $\alpha \geq 0$, and in particular for $\alpha = f(1)^{-1} f(x)$. We hence must have $f(x^2) f(1) \geq f(x)^2$, which is analogous to the standard inequality postulating nonnegativity of the variance of a random variable, which is itself a special case of the Cauchy--Schwarz inequality.

Now given a monotone semiring homomorphism $f : F \to \R_+$, its linear extension, which we also denote by $f$, is clearly in $C^*$. To establish extremality, suppose that $f = r f_1 + (1-r) f_2$ for $r\in (0,1)$ and nonzero $f_1,f_2 \in C^*$. We assume normalization such that $f_1(1) = f_2(1) = 1$, using the inequalities $f_1(1) > 0$ and $f_2(1) > 0$ derived above for all nonzero elements of $C^*$. Then we have, per the above,
\[
	f(x)^2 = f(x^2) = r f_1(x^2) + (1 - r) f_2(x^2) \geq r f_1(x)^2 + (1 - r) f_2(x)^2 .
\]
Also expanding $f(x)^2$ and cancelling terms therefore results in the inequality
\[
	2 f_1(x) f_2(x) \geq f_1(x)^2 + f_2(x)^2.
\]
This is true if only if $f_1(x) = f_2(x) = f(x)$, as was to be shown: $f$ is extremal.

This finishes the proof in the case where $F$ is order-cancellative. For arbitrary $F$, we can reduce to this case upon equipping $F$ with the new preorder relation in which $x$ is greater than or equal to $y$ if and only if
\[
	\exists w\in F, \:\: x + w \geq y + w.
\]
This makes $F$ order-cancellative by construction. Therefore by what we have already shown, condition~\ref{hom_order_sf} implies that for every $\eps > 0$ there are $m\in\N$ and $w\in F$ with
\[
	 x + \sum_{j=0}^m \eps^{j+1} u^j + w \geq y + w.
\]
We can now use the following standard argument to get rid of $w$. By induction and chaining inequalities---see the proof of~\cite[Theorem~6.18]{ocm} and references thereafter---we can prove that for every $n\in\Nplus$,
\[
	n\left( x + \sum_{j=0}^m \eps^{j+1} u^j \right) + w \geq ny + w.
\]
Thus choosing $k$ with $u^k \geq w$ and taking $n \geq \eps^{-(k+1)}$, we get
\[
	x + \sum_{j=0}^m \eps^{j+1} u^j + \eps^{k+1} u^k \geq y + w \geq y,
\]
so that~\eqref{approx_order_ineq} follows again upon an adjustment to $\eps$. 
\end{proof}

The main nontrivial ideas have entered in this proof. Deriving \Cref{main} from \Cref{semifield_case} is now relatively straightforward.

\begin{proof}[Proof of \Cref{main}]
The simple implications from either of the third three conditions to \ref{hom_order} are now routine, upon choosing $r := f(u)$, applying $f$ to both sides of the resulting inequality, suitably rearranging, and taking the limit $\eps \to 0$. In order to be able to divide by $f(z)$, one needs to use $f(z) > 0$ for nonzero $z$. This follows from the assumption $f(u) \geq f(1) > 0$ together with $u^k z \geq 1$ for suitable $k\in\N$.

For the implications from \ref{hom_order} to \ref{closure_order} and \ref{cat_order}, we use \Cref{semifield_case}, applied with $F$ being the semifield of fractions generated by $S$. This makes sense as soon as $S$ is zero-divisor-free~\cite[Example~11.7]{golan}; but this we can assume without loss of generality, since if $xy = 0$ for nonzero $x,y\in S$, then we choose $k\in\N$ with $u^k x \geq 1$ and $u^k y \geq 1$, giving $0 = u^{2k} x y \geq 1$, so that the preorder on $S$ is trivial in that $0 \geq x$ for all $x\in S$, making all statements of \Cref{main} trivially true. The polynomially universal element $u\in S$ is still polynomially universal in $F$.
	
We equip this semifield with the catalytic preorder, meaning that $xy^{-1} \geq ab^{-1}$ for nonzero elements of $F$ if and only if there is nonzero $t\in S$ with $xbt \geq ayt$, which is easily seen to be well-defined. Then the condition~\ref{hom_order_sf} of \Cref{semifield_case} is clearly equivalent to the present condition~\ref{hom_order}. We thus only need to deduce the conditions \ref{closure_order} and \ref{cat_order} from their counterparts in \Cref{semifield_case}.
	
We first show that condition~\ref{closure_order} in $S$ follows from condition~\ref{closure_order_sf2} in the semifield of fractions $F$. Given~\ref{closure_order_sf2} in $F$, we choose $p$ such that $p(r) \leq \eps$. Then writing the inequality~\eqref{approx_order_ineq2} explicitly in terms of fractions shows that there is nonzero $z \in S$ with
\[
	n\, z\, x + n\, p(u)\, z \geq n\, z\, y,
\]
where $n$ is chosen as a multiple of all demoninators of the coefficients of $p$, so that our new polynomial is $n\, p$.

If \ref{cat_order_sf} of \Cref{semifield_case} holds in the semifield of fractions, then the new condition~\ref{cat_order} trivially follows upon multiplying by $m\in\N$ so as to make all coefficients integral.

It remains to prove the implication from~\ref{cat_order} to~\ref{asymp_order}. For given $p$, $m$ and $z$, we get by the standard argument of chaining inequalities that for every $n\in\N$,
\[
	p(u)^n\, z\, x^n \geq m^n\, z\, y^n.
\]
Now we choose $k\in\N$ such that $u^k \geq z$ and $u^k z \geq 1$, which gives
\[
	u^{2k}\, p(u)^n \, x^n \geq z\, u^k\, m^n \, y^{n} \geq m^n \, y^n.
\]
So if we choose $n$ large enough so that $p(r)^n \geq r^{2k}$, then  we can choose $\tilde{p} := X^{2k}\, \hat{p}^n$ and $\tilde{m} := m^n$, which results in $\tilde{p}(u) \, x^n \geq \tilde{m} \, y^n$ as desired, and also
\[
	\tilde{p}(r) = r^{2k} p(r)^n \leq \hat{p}(r)^{2n} \leq (1 + 2\eps)^{2n} \leq (1 + 5\eps)^n,
\]
where the final inequality holds for sufficiently small $\eps$. This is enough since $\eps$ was arbitrary.
\end{proof}

\section{First applications and comparison with other Positivstellens\"atze}
\label{compare}

We start by instantiating \Cref{main} to polynomial semirings, where it is strictly weaker than P\'olya's Positivstellensatz. We then state some more interesting abstract special cases involving ordered rings and quadratic modules.

\begin{ex}
\label{poly}
Among the simplest preordered semirings are the polynomial semirings $\N[X_1,\ldots,X_d] = \N[\underline{X}]$ with the coefficientwise preorder. This $S$ clearly has characteristic zero. While the element $u := 1 + \sum_i X_i$ satisfies the upper bound part of polynomial universality, it satisfies the lower bound property only on those polynomials which have strictly positive constant coefficient; together with zero, these polynomials form a subsemiring $S := \N[\underline{X}]'$ to which we can apply \Cref{main}. Similarly, we may consider $S := \R_+[X_1,\ldots,X_d]' = \R_+[\underline{X}]'$, the semiring of real polynomials with nonnegative coefficients and strictly positive constant coefficient together with zero. Again using $u = 1 + \sum_i X_i$, the hypotheses of \Cref{main} are satisfied.

The monotone homomorphisms $\N[\underline{X}]' \to \R_+$, and similarly the ones $\R_+[\underline{X}]' \to \R_+$, are precisely the evaluation maps at any point in the nonnegative orthant $s\in\R_+^d$. One way to see this is to use the fact that the only additive monotone maps $\R\to\R$ are the $\R$-linear ones (Cauchy functional equation). We then obtain that for nonzero polynomials $f,g\in\R_+[\underline{X}]$, the following statements are equivalent\footnote{The case $f,g \in \N_+[\underline{X}]$ is not significantly different.}, with all inequalities between polynomials referring to the coefficientwise order:
\begin{enumerate}
	\item\label{a} $f(s) \geq g(s)$ for every $s\in\R^d_+$.
	\item\label{b} For every $r\in\R_+$ and $\eps > 0$, there exist a single-variable polynomial $p\in\R_+[X]$ and a nonzero polynomial $h \in \R_+[\underline{X}]$ such that $p(r) \leq \eps$ and
	\[
		h f + h\, p\left( 1 + \sum_i X_i \right) \geq h \, g.	
	\]
\item For every $r\in\R_+$ and $\eps > 0$, there exist a single-variable polynomial $p\in\R_+[X]$ and a nonzero polynomial $h \in \R_+[\underline{X}]$ such that $p(r) \leq 1 + \eps$ and
	\[
		p\left( 1 + \sum_i X_i \right) \, h f \geq h \, g.
	\]
	\item For every $r\in \R_+$ and $\eps > 0$, there exist a single-variable polynomial $p\in\R_+[X]$ and $n\in\N$ such that $p(r) \leq (1 + \eps)^n$ and
	\[
		p\left( 1 + \sum_i X_i \right) \, f^n \geq g^n.
	\]
\end{enumerate}
A priori we obtain these results only for when both $f$ and $g$ have strictly positive constant coefficient, but it is straightforward to see that this is irrelevant upon adding a small constant coefficient to any given $f,g \in \R_+[\underline{X}]$.

As the proof of \Cref{main} shows, the difficult implication here is the one from~\ref{a} to~\ref{b}. As we explain now, this implication also follows from P\'olya's Positivstellensatz, which moreover gives a concrete form for which $h$ and $p$ one can take in~\ref{b}.

To recall P\'olya's Positivstellensatz, we write
\[
	\Delta_d := \left\{ s\in \R_+^{d+1} \: \bigg| \: \sum_j s_j = 1 \right\}
\]
for the standard $d$-simplex, and consider a homogeneous polynomial $\hat{q} \in \R[X_0,\ldots,X_d]$. Let us also write $\hat{u} := \sum_{j=1}^{d+1} X_j$. Then one way to formulate P\'olya's Positivstellensatz is to say that the following are equivalent:
\begin{enumerate}[label=(\roman*)]
	\item $\hat{q}(s) \geq 0$ for every $s \in \Delta_d$.
	\item For every $\delta > 0$, there is $k \in \N$ such that $\hat{u}^k (\hat{q} + \delta \hat{u}^{\deg(\hat{q})}) \geq 0$ coefficientwise.
\end{enumerate}
We can use this result to study the positivity of an arbitrary nonzero polynomial $q \in \R[X_1,\ldots,X_d]$ on the orthant $\R_+^d$ by applying it to the homogenization $\hat{q} \in \R[X_0,\ldots,X_d]$. Since
\[
	q(s) \geq 0 \quad \forall s \in \R_+^d \qquad \Longleftrightarrow \qquad \hat{q}(s) \geq 0 \quad \forall s \in \Delta_d,
\]
we obtain that the following are equivalent for every $q \in \R[\underline{X}]$, with $u = 1 + \sum_i X_i$:
\begin{enumerate}[label=(\roman*)]
	\item\label{i} $q(s) \geq 0$ for every $s \in \R_+^d$.
	\item\label{ii} For every $\delta > 0$, there is $k \in \N$ such that $u^k (q + \delta u^{\deg(q)}) \geq 0$ coefficientwise.
\end{enumerate}
The key observation here is that $\hat{u}^k (\hat{q} + \eps \hat{u}^{\deg(\hat{q})})$ is the homogenization of $u^k (q + \eps u^{\deg(q)})$, so that each one of these polynomials has nonnegative coefficients if and only if the other one does. Note that the above equivalence is very similar to~\cite[Corollary~2.5]{marshall2}.

Now suppose that we are given $f,g\in\R_+[\underline{X}]$ satisfying our~\ref{a}. Consider $q := f - g$. Applying the equivalence of~\ref{i} and~\ref{ii} then shows that also~\ref{b} holds, namely with $p = \delta X^{\deg(f - g)}$ and $h = u^k$.
\end{ex}

So due to being weaker than P\'olya's Positivstellensatz, our \Cref{main} is not of much interest in the study of positive polynomials on the orthant $\R_+^d$. However, our result is much more general and also provides a characterization of positivity of polynomials on \emph{arbitrary subsets} of the orthant, as we show now.

\begin{ex}
	\label{ex_subset}
	Let $I \subseteq \R[\underline{X}]$ be any ideal, for example the ideal of polynomials which vanish on a given subset of $\R_+^d$. Consider the zero locus within the nonnegative orthant,
	\[
		\mathcal{V}_+(I) := \{ s \in \R_+^d \mid g(s) = 0 \;\,\forall g \in I \}.
	\]
	Then the following are equivalent for any $f \in \R[\underline{X}]$:
	\begin{enumerate}
		\item $f(s) \geq 0$ for all $s \in \mathcal{V}_+(I)$.
		\item For every $r \in \R_+$ and $\eps > 0$, there exist a single-variable polynomial $p \in \R_+[X]$, a nonzero polynomial $h \in \R_+[\underline{X}]$ and $g \in I$ such that $p(r) \leq \eps$ and the polynomial
			\[
				h (f + p(u)) + g 
			\]
			has nonnegative coefficients, where $u = 1 + \sum_j X_j$.
	\end{enumerate}
	To see this, we consider the semiring $R_+[\underline{X}]'$ equipped with the preorder in which $q_1 \ge q_2$ if and only if $(q_1 - q_2) \in (\R_+[\underline{X}] + I)$, which gives a preordered semiring with polynomially universal element $u$. Writing a given $f \in \R[\underline{X}]$ as $f = f_+ - f_-$ with $f_+,f_- \in \R_+[\underline{X}]'$ then makes \Cref{main}\ref{hom_order}--\ref{closure_order} specialize to the equivalence claimed above.
\end{ex}

The next example provides a useful permanence property of our polynomial growth condition.

\begin{ex}
\label{polyS}
Let $S$ be a preordered semiring of characteristic zero having a polynomially universal element $u$, so that the assumptions of \Cref{main} are satisfied. Then the semiring of polynomials with nonzero constant coefficient $S[X]'$, equipped with the coefficientwise preorder, is again a preordered semiring of characteristic zero, and has a polynomially universal element given by $u + X$. Hence the assumptions of \Cref{main} are stable under $S\mapsto S[X]'$.

The same statement apply to the semiring of Laurent polynomials $S[X,X^{-1}]$ with the coefficientwise order, using $X^{-1} + u + X$ as a polynomially universal element. Now the restriction to nonzero constant coefficient is no longer needed.
\end{ex}

Next, we specialize our \Cref{main} to ordered rings and quadratic modules. In the upcoming \Cref{qm,kd-like}, we will not make use of the characterization \ref{asymp_order} involving large powers of the semiring elements.

The following result is similar to the Positivstellensatz of Krivine--Kadison--Dubois\footnote{See~\cite[Proposition~3]{krivine} and~\cite{dubois}, and also~\cite{BS} for a modern formulation with purely algebraic proof, and~\cite[Theorem~5.4.4]{marshall} for a textbook treatment of a very general version.}, but replaces its Archimedeanicity assumption by our polynomial growth condition, at the cost of a weaker conclusion: in Krivine--Kadison--Dubois, it is possible to take $w \in \N$.

\begin{thm}
\label{kd-like}
Let $R$ be a ring of characteristic zero and $P \subseteq R$ a subsemiring. Suppose that there is $v \in P$ such that for every $a\in R$, we can find $p\in\N[X]$ with
\[
	p(v) - a \in P. 
\]
Then for every $a\in R$, the following are equivalent:
\begin{enumerate}
\item\label{Ppos} $f(a) \geq 0$ for every ring homomorphism $f : R \to \R$ with $f(P) \subseteq \R_+$.
\item For every $r\in\R_+$ and $\eps > 0$, there exist a polynomial $q\in\N[X]$, an element $w\in P$ and $n\in\Nplus$ such that $q(r) \leq \eps n$ and
\[
	(1 + w)\, (n\, a + q(v)) \in P.
\]
\end{enumerate}
\end{thm}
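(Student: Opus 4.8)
The plan is to deduce \Cref{kd-like} from \Cref{main} applied to a carefully chosen preordered subsemiring of $R$. First I would dispose of the degenerate case $-1 \in P$. The hypothesis already forces $R = P - P$, since any $a \in R$ can be written as $a = p(v) - \bigl(p(v) - a\bigr)$ with both terms in $P$. If moreover $-1 \in P$, then $P$ is closed under negation and hence $P = R$; in that case there is no ring homomorphism $f : R \to \R$ with $f(P) \subseteq \R_+$ (as $f(-1) = -1 \notin \R_+$), so condition~\ref{Ppos} is vacuously true, while the second condition holds with $w = -1$, and the equivalence is trivial. So from now on assume $-1 \notin P$; in particular $1 + c \neq 0$ for every $c \in P$.

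Next I would introduce the subsemiring $S := \{0\} \cup (1 + P) \subseteq R$, equipped with the preorder $s \geq s'$ iff $s - s' \in P$. That $S$ is a subsemiring follows from $(1 + c) + (1 + c') = 1 + (1 + c + c')$ and $(1 + c)(1 + c') = 1 + (c + c' + cc')$; it has characteristic zero because $R$ does, and this preorder makes it a preordered semiring with $1 \geq 0$. The reason for this particular choice is that every nonzero element of $S$ automatically has the shape $1 + w$ with $w \in P$, matching the shape of the multiplier in the second condition of \Cref{kd-like}. I claim $u := 1 + v$ is polynomially universal in $S$. For the lower bound, given a nonzero $x = 1 + c \in S$ one has $u\,x - 1 = (1 + v)(1 + c) - 1 = c + v + vc \in P$, so $\tilde p = 1$ works. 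For the upper bound, the hypothesis supplies $p \in \N[X]$ with $p(v) - c \in P$; since $1 + v \geq v \geq 0$, evaluation of $\N$-coefficient polynomials is monotone, so $p(1 + v) - p(v) \in P$ and hence $p(u) - x = \bigl(p(1+v) - p(v)\bigr) + \bigl(p(v) - c\bigr) \in P$. Adding the two polynomials, $\tilde p := p + 1$ witnesses both bounds at once.

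The third ingredient is an identification of monotone semiring homomorphisms $g : S \to \R_+$ with ring homomorphisms $f : R \to \R$ satisfying $f(P) \subseteq \R_+$: such an $f$ restricts to a monotone homomorphism $S \to \R_+$ since $S \subseteq P$, and conversely, given $g$, the formula $\tilde g(c) := g(1 + c) - 1$ defines a semiring homomorphism $P \to \R_+$ — additivity and multiplicativity come from applying $g$ to $(1+c) + (1+c')$ and to $(1+c)(1+c')$ and simplifying, and monotonicity of $g$ gives $\tilde g(P) \subseteq \R_+$ — which then extends uniquely along $R = P - P$ to a ring homomorphism with $f(P) \subseteq \R_+$ restricting to $g$ on $S$. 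Now fix $a \in R$ and write $a = x - y$ with $x := 1 + c$, $y := 1 + c'$ for some $c, c' \in P$; both are nonzero in $S$ by the first paragraph. Under the above correspondence, and using $f(x) - f(y) = f(a)$, condition~\ref{Ppos} is exactly condition~\ref{hom_order} of \Cref{main} for this $x$ and $y$.

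It remains to match condition~\ref{closure_order} of \Cref{main} with the second condition of \Cref{kd-like}. Unwinding $m\,z\,x + p(u)\,z \geq m\,z\,y$ in $S$, with $z = 1 + w$ ($w := z - 1 \in P$) and $p(u) = p(1 + v)$, gives $(1 + w)\bigl(m\,a + p(1 + v)\bigr) \in P$; setting $q(X) := p(1 + X) \in \N[X]$ turns this into $(1 + w)\bigl(n\,a + q(v)\bigr) \in P$ with $n = m$. The scalar bound transfers by the substitution $r \mapsto 1 + r$: to obtain the second condition of \Cref{kd-like} for a given $r \in \R_+$, apply condition~\ref{closure_order} of \Cref{main} with parameter $1 + r$, so that $q(r) = p(1 + r) \leq \eps m$. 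For the converse direction one additionally uses $q(1 + v) \geq q(v)$ (again by monotonicity of polynomial evaluation at $v \geq 0$) to pass from $(1 + w)(n\,a + q(v)) \in P$ to $(1 + w)(n\,a + q(1 + v)) \in P$, which is condition~\ref{closure_order} with $p := q$, $z := 1 + w$, $m := n$. Invoking the equivalence of~\ref{hom_order} and~\ref{closure_order} from \Cref{main} then finishes the proof. I expect the main obstacle to be the second and third paragraphs — pinning down the right subsemiring $S$ and checking polynomial universality of $u = 1 + v$ together with the homomorphism correspondence — since that is where every hypothesis gets used and where one must keep track that shuttling between polynomials in $v$ and polynomials in $1 + v$ preserves $\N$-coefficients; everything after that is bookkeeping in the spirit of \Cref{poly} and \Cref{ex_subset}.
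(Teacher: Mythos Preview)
Your proof is correct and follows essentially the same route as the paper: reduce to \Cref{main} applied to a subsemiring of $R$ preordered by $s \ge s' \Leftrightarrow s - s' \in P$, with $u = 1+v$ polynomially universal, identify monotone homomorphisms $S \to \R_+$ with ring homomorphisms $R \to \R$ mapping $P$ into $\R_+$, and translate between $p(1+v)$ and $q(v)$ via $q(X) := p(1+X)$. Your choice $S = \{0\} \cup (1+P)$ is slightly smaller than the paper's $S = \{0\} \cup \{x \in P : nx - 1 \in P \text{ for some } n\}$, which has the pleasant side effect that every nonzero $z \in S$ is already of the form $1+w$ with $w \in P$, so you avoid the extra rescaling step the paper needs; you also treat the degenerate case $-1 \in P$ explicitly, which the paper leaves implicit. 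One cosmetic slip: the displayed identity ``$p(u) - x = (p(1+v) - p(v)) + (p(v) - c)$'' is off by $1$ since $x = 1+c$; the equality you wrote is actually $\tilde p(u) - x$ for your final $\tilde p = p+1$, so just relabel that line.
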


\begin{proof} 
The existence of such $v\in P$ implies that $R = P - P$.

We define the semiring $S$ to be given by zero together with those $x\in P$ for which there is $n\in\Nplus$ with $(nx - 1) \in P$. In particular, $S$ contains every element of the form $1 + x$ for $x\in P$. This $S$ is closed under addition, since
\[
	mn(x + y) - 1 = m(nx - 1) + n(my - 1) + (m + n - 1),
\]
where the last term is in $P$ because of $1 \in P$. This $S$ is also closed under multiplication, thanks to
\[
	mnxy - 1 = (nx - 1)\, (my - 1) + (nx - 1) + (my - 1).
\]
Declaring $x \geq y$ to hold in $S$ if and only if $(x - y) \in P$ turns $S$ into a preordered semiring. It is straightforward to check that $u := 1 + v$ is a polynomially universal element, where the upper bound is by assumption and the lower bound works as follows: if $x\in S$ is nonzero, then we have $n\in\Nplus$ with $(nx - 1) \in P$. Using the polynomial $p := n\, X$ now obviously works, since $u \geq 1$.

We now apply \Cref{main} to this $S$. The assumptions imply that restriction from $R$ to $S$ implements a bijection between the monotone homomorphisms $f : S\to \R_+$ and the ring homomorphisms $f : R \to \R$ with $f(P) \subseteq \R_+$. Hence we conclude that~\ref{Ppos} holds if and only if for every $\eps > 0$ and $r\in\R_+$ there exist nonzero $z\in S$ and $p\in\N[X]$ and $m\in\Nplus$ with $p(1 + r) \leq \eps\, m$ and $z(m\, a + p(1 + v)) \in P$. Since $z\in S$, we can conclude that $w := \ell z - 1 \in P$ for suitable $\ell\in\Nplus$. We now take $q := p(1 + X)$ and the claim follows.
\end{proof}

For a ring $A$, we write $A^2 \subseteq A$ for its subsemiring consisting of all sums of squares. A \emph{quadratic module} $M\subseteq A$ is a subset with $1\in M$ which is closed under addition as well as under multiplication by elements from $A^2$. We turn $A^2$ into a preordered semiring by putting $x \geq y$ if and only if $x - y \in M$. Suppose that $A$ has characteristic zero and that $A= A^2 - A^2$, meaning that every element is a difference of sums of squares. Then in order for Strassen's \Cref{spss} to even apply, $M$ must be Archimedean. But thanks to \Cref{main}, we can now state a Positivstellensatz for quadratic modules which weakens this Archimedeanicity to polynomial growth. In the following theorem, we restrict ourselves to the case of $\R$-algebras for simplicity.

\begin{thm}
\label{qm}
Let $A$ be an $\R$-algebra and $A^2 \subseteq A$ the subsemiring of sums of squares. Let $M \subseteq A$ be a quadratic module for which there is $v\in A^2$ such that for every $a\in A^2$, we can find $p\in\R_+[X]$ with
\[
	p(v) - a \in M.
\]
Then the following are equivalent for $a \in A$:
\begin{enumerate}
\item\label{qmpos} $f(a) \geq 0$ for every algebra homomorphism $f : A \to \R$ with $f(M) \subseteq \R_+$.
\item For every $r\in\R_+$ and $\eps > 0$, there exist a polynomial $q\in\R_+[X]$ and an element $w\in A^2$ such that $q(r)\leq \eps$ and
\[
	(1 + w)\, (a + q(v)) \in M.
\]
\end{enumerate}
\end{thm}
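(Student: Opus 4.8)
The plan is to follow the template of the proof of \Cref{kd-like}: build an auxiliary preordered semiring and invoke \Cref{main}. Two facts enter throughout. First, in any $\R$-algebra one automatically has $A = A^2 - A^2$, via $a = \bigl(\tfrac{a+1}{2}\bigr)^{2} - \bigl(\tfrac{a-1}{2}\bigr)^{2}$. Second, $A^2 \subseteq M$, because a quadratic module contains $1$ and is closed under multiplication by squares and under addition.

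I would set
\[
	S := \{0\} \cup \{\, x \in A^2 \mid nx - 1 \in A^2 \text{ for some } n \in \Nplus \,\},
\]
preordered by declaring $x \ge y$ to hold iff $x - y \in M$. The single point of departure from \Cref{kd-like} is that the condition defining $S$ is phrased in terms of $A^2$ rather than $M$; this is exactly what later forces the correction factor in the second condition of \Cref{qm} to be a \emph{sum of squares}. I would check --- using $1 \in A^2$, closure of $A^2$ under sums and products, and the $\R$-algebra structure (to rescale sums of squares by positive reals) --- that $S$ is a subsemiring of $A^2$, has characteristic zero, satisfies $1 \ge 0$, and that $u := 1 + v$ is polynomially universal: the lower bound $u^{k} x \ge 1$ is immediate from a witness $nx - 1 \in A^2$ and $u \ge 1$, while the upper bound $p(u) \ge x$ follows from the hypothesis on $M$ applied to $x \in A^2$, together with the monotonicity $p(v) \le p(u)$ for $p \in \R_+[X]$. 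Unless $M = A$ --- a degenerate case in which \Cref{qm} holds trivially in both directions --- one has $-1 \notin M$, so $S$ is an algebra over $\R_+$ carrying its usual order and \Cref{Ralg} applies.

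The core of the reduction is the correspondence between monotone semiring homomorphisms $g : S \to \R_+$ and $\R$-algebra homomorphisms $f : A \to \R$ with $f(M) \subseteq \R_+$. One direction is restriction. For the other, since $1 + x \in S$ for every $x \in A^2$, I would put $\tilde f(x) := g(1+x) - 1$ on $A^2$, check that $\tilde f$ is a semiring homomorphism using the identities $(1+x) + (1+y) = (1+x+y) + 1$ and $(1+x)(1+y) + 1 = (1+xy) + (1+x+y)$ inside $S$, extend it to $f : A \to \R$ via $A = A^2 - A^2$, and verify $f(M) \subseteq \R_+$ by writing $w \in M$ as $w_1 - w_2$ with $w_i \in A^2$ and using $1 + w_1 \ge 1 + w_2$ in $S$. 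One then checks the two constructions are mutually inverse. I expect most of the (elementary) bookkeeping to reside in this dictionary.

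Finally I would run \Cref{main}. Writing $a = a_1 - a_2$ with $a_1, a_2 \in A^2$ and putting $x := 1 + a_1$ and $y := 1 + a_2$ --- both in $S$, and nonzero since $-1 \in A^2$ would force $M = A$ --- the dictionary translates the hypothesis of \Cref{qm} into condition~\ref{hom_order} of \Cref{main} for this $x, y$. Given $r \in \R_+$ and $\eps > 0$, I would apply \Cref{main}\ref{closure_order} with parameter $1 + r$ in place of $r$ and with $m = 1$ (by \Cref{Ralg}), obtaining $p \in \R_+[X]$ with $p(1+r) \le \eps$ and a nonzero $z \in S$ with $z\bigl(a + p(u)\bigr) \in M$ --- the left-hand side here is $z x + p(u) z - z y$ after simplification. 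Choosing $n$ with $nz - 1 \in A^2$, multiplying through by $n$, and setting $w := nz - 1 \in A^2$ and $q(Y) := p(1+Y) \in \R_+[Y]$ --- so that $p(u) = q(v)$ and $q(r) = p(1+r) \le \eps$ --- yields $(1+w)\bigl(a + q(v)\bigr) \in M$, which is precisely the second condition of \Cref{qm}. The reverse implication is the routine one: apply any $f$ from the hypothesis to $(1+w)(a+q(v)) \in M$, divide by $1 + f(w) \ge 1$, set $r := f(v)$, and let $\eps \to 0$. The one genuinely new ingredient beyond \Cref{kd-like} is the design of $S$ around the condition $nx - 1 \in A^2$; the main obstacle is bookkeeping --- getting the homomorphism dictionary right and keeping the change of variable between $v$ and $u = 1 + v$ straight.
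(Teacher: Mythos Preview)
Your proposal is correct and follows essentially the same approach as the paper. Your semiring $S$ coincides with the paper's (which uses the equivalent condition $x - \eps \in A^2$ for some real $\eps > 0$), the universal element $u = 1+v$ and the change of variable $q(Y) := p(1+Y)$ are identical, and the application of \Cref{main}\ref{closure_order} together with \Cref{Ralg} matches. The paper simply asserts the bijection between monotone homomorphisms $S \to \R_+$ and algebra homomorphisms $A \to \R$ with $f(M) \subseteq \R_+$, whereas you spell out the construction via $\tilde f(x) := g(1+x) - 1$; and you make the degenerate case $M = A$ explicit, which the paper does not.
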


The reduction to \Cref{main} is similar to the proof of \Cref{kd-like} above.

\begin{proof}
	It is well-known that $A = A^2 - A^2$, for example since $a = \left(\tfrac{1+a}{2}\right)^2 - \left(\tfrac{1-a}{2}\right)^2$~\cite[p.~22]{marshall}.

We apply \Cref{main} with $S$ being given by zero together with those $x \in A$ for which there is $\eps > 0$ with $(x - \eps) \in A^2$. This set is clearly closed under addition, and also under multiplication since $xy - \eps\delta = (x - \eps)(y - \delta) + \eps (y - \delta) + (x - \eps) \delta$. We order $S$ by declaring $x \geq y$ to hold if and only if $(x - y) \in M$, and it is easy to see that we get a preordered semiring. Then $u := 1 + v$ is a polynomially universal element, where the upper bound is by assumption and the lower bound works as follows: if $x\in S$ is nonzero, then we have $\eps > 0$ with $(x - \eps) \in A^2$. Using the polynomial $p := \eps^{-1} X$, we have $x\, p(u) = \eps^{-1}\, x\, (1 + v) \geq \eps^{-1}\, x \geq 1$, as was to be shown.

We now apply \Cref{main} together with \Cref{Ralg} to this $S$ with $x,y\in S$ chosen such that $a = x - y$. The assumptions imply that restriction from $A$ to $S$ implements a bijection between the monotone homomorphisms $f : S \to \R_+$ and the algebra homomorphisms $f : A \to \R$ with $f(M) \subseteq \R_+$. Hence we conclude that~\ref{qmpos} holds if and only if for every $\eps > 0$ and $r\in\R_+$ there exist nonzero $z\in S$ and $p\in\R_+[X]$ with $p(1 + r) \leq \eps$ and $z (a + p(1 + v)) \in M$. Since $z\in S$, we can assume $w := z - 1 \in A^2$ after rescaling of $z$. We now take $q := p(1 + X)$ and the claim follows.
\end{proof}

\bibliographystyle{plain}
\bibliography{strassen_positivstellensatz}

\begin{thebibliography}{10}

\bibitem{BS}
Eberhard Becker and Niels Schwartz.
\newblock Zum {D}arstellungssatz von {K}adison-{D}ubois.
\newblock {\em Arch. Math. (Basel)}, 40(5):421--428, 1983.

\bibitem{BLP}
F.~F. Bonsall, J.~Lindenstrauss, and R.~R. Phelps.
\newblock Extreme positive operators on algebras of functions.
\newblock {\em Math. Scand.}, 18:161--182, 1966.

\bibitem{BSS}
Sabine Burgdorf, Claus Scheiderer, and Markus Schweighofer.
\newblock Pure states, nonnegative polynomials and sums of squares.
\newblock {\em Comment. Math. Helv.}, 87(1):113--140, 2012.
\newblock \href{https://arxiv.org/abs/0905.4161}{arXiv:0905.4161}.

\bibitem{dubois}
D.~W. Dubois.
\newblock A note on {D}avid {H}arrison's theory of preprimes.
\newblock {\em Pacific J. Math.}, 21:15--19, 1967.

\bibitem{ocm}
Tobias Fritz.
\newblock Resource convertibility and ordered commutative monoids.
\newblock {\em Math. Structures Comput. Sci.}, 27(6):850--938, 2017.
\newblock \href{http://arxiv.org/abs/1504.03661}{arXiv:1504.03661}.

\bibitem{golan}
Jonathan~S. Golan.
\newblock {\em Semirings and their applications}.
\newblock Kluwer Academic Publishers, Dordrecht, 1999.
\newblock Updated and expanded version of {{\i}t The theory of semirings, with
  applications to mathematics and theoretical computer science} [Longman Sci.
  Tech., Harlow, 1992; MR1163371 (93b:16085)].

\bibitem{ineqs}
G.~H. Hardy, J.~E. Littlewood, and G.~P\'{o}lya.
\newblock {\em Inequalities}.
\newblock Cambridge, at the University Press, 1952.
\newblock 2d ed.

\bibitem{krivine}
Jean-Louis Krivine.
\newblock Quelques propri\'{e}t\'{e}s des pr\'{e}ordres dans les anneaux
  commutatifs unitaires.
\newblock {\em C. R. Acad. Sci. Paris}, 258:3417--3418, 1964.

\bibitem{LN}
Jean~B. Lasserre and Tim Netzer.
\newblock S{OS} approximations of nonnegative polynomials via simple high
  degree perturbations.
\newblock {\em Math. Z.}, 256(1):99--112, 2007.

\bibitem{lasserre}
Jean~Bernard Lasserre.
\newblock {\em Moments, positive polynomials and their applications}, volume~1
  of {\em Optimization Series}.
\newblock Imperial College Press, 2009.

\bibitem{marshall2}
Murray Marshall.
\newblock Extending the {A}rchimedean {P}ositivstellensatz to the non-compact
  case.
\newblock {\em Canad. Math. Bull.}, 44(2):223--230, 2001.

\bibitem{marshall}
Murray Marshall.
\newblock {\em Positive polynomials and sums of squares}, volume 146 of {\em
  Mathematical Surveys and Monographs}.
\newblock American Mathematical Society, Providence, RI, 2008.

\bibitem{PD}
Alexander Prestel and Charles~N. Delzell.
\newblock {\em Positive polynomials}.
\newblock Springer Monographs in Mathematics. Springer-Verlag, Berlin, 2001.
\newblock From Hilbert's 17th problem to real algebra.

\bibitem{scheiderer}
Claus Scheiderer.
\newblock Positivity and sums of squares: a guide to recent results.
\newblock In {\em Emerging applications of algebraic geometry}, volume 149 of
  {\em IMA Vol. Math. Appl.}, pages 271--324. Springer, New York, 2009.

\bibitem{schmudgen}
Konrad Schm\"{u}dgen.
\newblock {\em The moment problem}, volume 277 of {\em Graduate Texts in
  Mathematics}.
\newblock Springer, Cham, 2017.

\bibitem{strassen}
Volker Strassen.
\newblock The asymptotic spectrum of tensors.
\newblock {\em J. Reine Angew. Math.}, 384:102--152, 1988.

\bibitem{embezzling}
Wim van Dam and Patrick Hayden.
\newblock Embezzling entangled quantum states.
\newblock {\em Phys.~Rev.~A}, 67(6):060302, 2003.
\newblock
  \href{https://arxiv.org/abs/quant-ph/0201041}{arXiv.org:quant-ph/0201041}.

\bibitem{zuiddam_thesis}
Jeroen Zuiddam.
\newblock {\em Algebraic complexity, asymptotic spectra and entanglement
  polytopes}.
\newblock PhD thesis, University of Amsterdam, 2018.
\newblock
  \href{http://www.math.ias.edu/~jzuiddam/diss/diss\_v2.pdf}{www.math.ias.edu/$\sim$jzuiddam/diss/diss\_v2.pdf}.

\bibitem{zuiddam}
Jeroen Zuiddam.
\newblock The asymptotic spectrum of graphs and the {S}hannon capacity.
\newblock {\em Combinatorica}, 39(5):1173--1184, 2019.
\newblock \href{https://arxiv.org/abs/1807.00169}{arXiv:1807.00169}.

\end{thebibliography}

\end{document}